\documentclass[11pt,a4paper]{amsart}
\usepackage[T1]{fontenc}
\usepackage[utf8]{inputenc}
\usepackage{lmodern}
\usepackage{amsmath,amssymb,amsthm,mathtools}
\usepackage[a4paper,left=27mm,right=27mm,top=26mm,bottom=27mm]{geometry}
\usepackage{microtype}
\usepackage{needspace}
\usepackage{verbatim}
\usepackage{xcolor}
\usepackage[colorlinks=true,linkcolor=blue!45!black,citecolor=blue!45!black,urlcolor=blue!45!black]{hyperref}
\newtheorem{theorem}{Theorem}[section]
\newtheorem{lemma}[theorem]{Lemma}
\newtheorem{corollary}[theorem]{Corollary}
\newtheorem{proposition}[theorem]{Proposition}

\theoremstyle{definition}
\newtheorem{definition}[theorem]{Definition}
\newtheorem{remark}[theorem]{Remark}
\DeclareMathOperator{\supp}{supp}
\newcommand{\cA}{\mathcal{A}}

\newcommand{\An}{\cA_n}

\newcommand{\cF}{\mathcal{F}}
\newcommand{\cH}{\mathcal{H}}
\newcommand{\cT}{\mathcal{T}}

\allowdisplaybreaks[1]
\begin{document}
\hypersetup{pdftitle={Intersecting integer partitions: star bounds and counterexamples at every scale},pdfauthor={Yury Person and Thomas Schweser}}\title[Intersecting integer partitions]{Intersecting integer partitions:\\star bounds and counterexamples at every scale}
\author[Y.~Person]{Yury Person}
\address{Institut f\"ur Mathematik, Technische Universit\"at Ilmenau, 98684 Ilmenau, Germany} 
\email{yury.person@tu-ilmenau.de}

\author[T.~Schweser]{Thomas Schweser}
\address{Technische Hochschule Rosenheim, Fakult\"at f\"ur Angewandte Natur- und Geisteswissenschaften, Hochschulstra\ss e 1, 83024 Rosenheim, Germany} 
\email{thomas.schweser@th-rosenheim.de}
\begin{abstract}
Two integer partitions $t$-intersect if they have at least $t$ common parts, counted with multiplicity. 
We study the largest $t$-intersecting families of integer partitions of $n$ into exactly $k$ positive parts. The canonical $t$-star consists of the partitions containing at least $t$ ones. Applying Kupavskii's weak-spread theorem, we prove that this star is largest whenever $n\ge Ak^3$, for every fixed $A>24$ and all sufficiently large $k$, uniformly over $1\le t<k$. We also give counterexamples to Borg's conjecture at every intersection scale: for all sufficiently large $k$ and for every $1\le t\le \lceil k/4\rceil$, we describe  $t$-intersecting families on integer partitions of $n=\lfloor tk^2/3\rfloor$ into $k$ parts which are strictly larger than the corresponding canonical stars. 

Moreover we present a family of counterexamples to Borg's conjectures about largest $t$-intersecting families of integer partitions of $n$ without restricting the number of parts.
\end{abstract}
\maketitle

\section{Introduction}
A family $\cF\subseteq \binom{X}{k}$ of  $k$-sets of an $n$-element set $X$ is \emph{$t$-intersecting} if any two members of $\cF$ intersect in at least $t$ elements. The problem of determining the size of a largest $t$-intersecting family is \emph{one of the oldest problems in combinatorial extremal set theory}~\cite{AK1997} and was initiated by Erd\H{o}s, Ko and Rado~\cite{EKR1961}. The famous Erd\H{o}s--Ko--Rado theorem~\cite{EKR1961} states that  a $1$-intersecting (called simply \emph{intersecting}) family of $k$-subsets  of an $n$-element set has at most $\binom{n-1}{k-1}$ members when $n\ge 2k$.
A family consisting of all $k$-sets containing one fixed element, a so-called \emph{star}, is clearly intersecting and achieves this bound. More generally, the family of all $k$-sets containing a fixed $t$-set is a \emph{full $t$-star} and has size
$\binom{n-t}{k-t}$. The work of Frankl~\cite{Frankl1978} and subsequently Wilson~\cite{Wilson1984} established the sharp range $n\ge(t+1)(k-t+1)$ in which the bound $|\cF|\le \binom{n-t}{k-t}$ holds, for $1\le t<k$.
Ahlswede and Khachatrian's complete intersection theorem~\cite{AK1997} settled the remaining parameter ranges, thereby proving Frankl's conjecture~\cite{Frankl1978}
that, for some integer $i\ge0$, all $k$-sets containing at least $t+i$
elements of a fixed $(t+2i)$-set form a largest $t$-intersecting family.

Intersection problems have also been studied for multisets with fixed
cardinality over a finite ground set. Meagher and Purdy~\cite{MP2011}
determined the largest intersecting families in this setting, and
F\"uredi, Gerbner and Vizer~\cite{FGV2015} obtained $t$-intersection
results. For further results on intersecting families, see Borg's survey~\cite{Borg2012}.

Borg~\cite{Borg2014, Borg2016} initiated the study of intersecting \emph{integer partitions}. The theory of integer partitions is a classical topic in number theory; see, for example, the survey by Ahlgren and Ono~\cite{AO2001} for a general mathematical audience.
 An \emph{integer partition of $n$ into $k$ positive parts} is a nondecreasing sequence
$\lambda=(a_1,\ldots,a_k)$ with $\sum_i a_i=n$. We write $P_{n,k}$ for the set of these partitions and $p(n,k)=|P_{n,k}|$. Similarly, we write $P_n$ for the set $\cup_{k\in[n]} P_{n,k}$ of partitions on $n$ without restricting the number of parts and $p(n)$ for its cardinality $|P_n|$.  We count common parts  with multiplicity. Thus, if $m_\lambda(a)$ is the multiplicity of the value $a$, the intersection size of $\lambda$ and $\mu$ is
\[
 I(\lambda,\mu):=\sum_{a\ge1}\min\{m_\lambda(a),m_\mu(a)\}.
\]
A family is \emph{$t$-intersecting} if every two members have intersection size at least $t$.
A \emph{full $t$-star} is the family of all partitions in $P_{n,k}$ or in $P_n$ containing a fixed multiset of $t$ parts, counted with multiplicity.
The canonical $t$-star is
\[
 \mathcal T_{n,k,t}:=\{\lambda\in P_{n,k}:m_\lambda(1)\ge t\},
 \qquad |\mathcal T_{n,k,t}|=p(n-t,k-t),
\]
and similarly 
\[
 \mathcal{T}_{n,t}:=\{\lambda\in P_{n}:m_\lambda(1)\ge t\},
 \qquad |\mathcal{T}_{n,t}|=p(n-t).
\]

A family is \emph{trivial} if it is contained in a full $t$-star; otherwise it is \emph{nontrivial}.

 Integer partitions impose an additional fixed-sum condition.
Consequently, different part values play different roles, and the
ambient family is not invariant under arbitrary relabelling.
 Borg \cite{Borg2014} considered the related notion of \emph{strong
intersection}: two nondecreasing tuples strongly intersect if they
agree in at least one coordinate. He proved that the partitions
containing a part equal to $1$ form a largest strongly intersecting
subfamily of $P_{n,k}$ for every $2\le k\le n$.

For the intersection notion considered here, equal parts may occur in
different positions. 
Borg~\cite[Conjecture~1.6]{Borg2016} conjectured that $\mathcal T_{n,k,t}$ is largest for $1\le t<k\le n$, apart from $(n,k)=(t+7,t+2)$. The exceptional example begins with
\[
 \{(1,2,5),(1,3,4),(2,2,4),(2,3,3)\}\subseteq P_{8,3};
\]
it has four members, compared with $p(7,2)=3$ in the canonical star. Adding $t-1$ ones gives the stated exception. Borg proved the star bound for $n\le2k-t+1$ and for $n\ge3tk^5$.

We improve Borg's bound $n\ge3tk^5$ to  $n\ge Ak^3$, a range optimal up to a constant factor when $t=\Theta(k)$. Our proof uses the spread-approximation method of Kupavskii and Zakharov~\cite{KZ2024}, in the weak-spread form developed by Kupavskii \cite{Kupavskii2026} and exploits estimates on $p(n,k)$ proved by Szekeres~\cite{Szekeres1951,Szekeres1953} in the form given by Canfield~\cite{Canfield1997}. 

\Needspace{10\baselineskip}
\begin{theorem}[Cubic star range]\label{thm:main-upper}
For every fixed $A>24$, there is $k_0(A)$ such that, if $k\ge k_0(A)$, $n\ge Ak^3$, and $1\le t<k$, then every $t$-intersecting $\mathcal F\subseteq P_{n,k}$ satisfies
\[
 |\mathcal F|\le p(n-t,k-t).
\]
Equality holds if and only if $\mathcal F$ is a full $t$-star
of maximum size. If $k\ge t+3$, the canonical star
$\mathcal T_{n,k,t}$ is the unique maximum family.
If $A>48$ and $\mathcal F$ is nontrivial, then
\[
 |\mathcal F|\le\tfrac12p(n-t,k-t).
\]
\end{theorem}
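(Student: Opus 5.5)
We encode each partition $\lambda\in P_{n,k}$ as the set $S(\lambda)=\{(a,j):1\le j\le m_\lambda(a)\}$ of \emph{labelled parts}. Then $I(\lambda,\mu)=|S(\lambda)\cap S(\mu)|$, and $\mathcal F$ becomes a $t$-intersecting family of $k$-element subsets of the ground set $X=\{(a,j)\}$. A full $t$-star corresponds to all sets containing a fixed $t$-set $T$ of labelled parts; its size is $p(n-\sigma(T),k-t)$, where $\sigma(T)$ is the sum of the parts in $T$. Since $p(m,k-t)$ is strictly increasing in $m$, the largest such star has $\sigma(T)=t$, i.e.\ $T=\{(1,1),\dots,(1,t)\}$. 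The plan is to apply Kupavskii's weak-spread theorem to $\mathcal F\subseteq\binom{X}{k}$, and for this we need the ambient family $\mathcal A=\{S(\lambda)\}$ to be weakly $r$-spread with a suitable $r$.

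\textbf{Step 1 (spreadness).} For a $q$-set $Q$ of labelled parts with sum $\sigma$, the number of members of $\mathcal A$ containing $Q$ is at most $p(n-\sigma,k-q)\le p(n-q,k-q)$. So we must bound the ratio $p(n-q,k-q)/p(n,k)$ by $r^{-q}$. From the Szekeres--Canfield asymptotics in the range $n\ge Ak^3$ (where $p(n,k)\sim \binom{n-1}{k-1}/k!$ up to a factor $1+O(k^3/n)$), one gets
\[
 \frac{p(n-1,k-1)}{p(n,k)}\approx \frac{k}{n}\bigl(1+O(k^2/n)\bigr),
\]
so the family is weakly $r$-spread with $r\approx n/k\ge Ak^2$. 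The weak-spread theorem requires roughly $r> C k\cdot t$-type conditions; more precisely, one needs $r$ of order $k^2$ times an explicit constant. Tracking constants, I expect the threshold $A>24$ to come exactly from here. This is the step to do carefully, uniformly in $t<k$. It requires control of $p(n-q,k-q)/p(n,k)$ for all $q\le k$, not just $q=1$, and that is what I expect to be the main technical obstacle. I would first try telescoping the one-step ratio bound, using the monotonicity of $p(m,j)/p(m-1,j-1)$.

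\textbf{Step 2 (spread approximation).} The theorem yields a family $\mathcal S$ of sets of size at most $k$, each of size at least $t$ and $t$-intersecting among themselves, such that all but a remainder $\mathcal F'$ of $\mathcal F$ contain some member of $\mathcal S$, with $|\mathcal F'|\le r^{-t-1}|\mathcal A|$-ish. If $\mathcal S$ consists of a single $t$-set $T$, then $\mathcal F$ is essentially a star centred at $T$. Otherwise every $S\in\mathcal S$ has size at least $t+1$, or there are two $t$-sets, and the standard counting gives
\[
 |\mathcal F|\le |\mathcal S_{\min}|\,p(n-t-1,k-t-1)+|\mathcal F'|\le \tfrac12\,p(n-t,k-t).
\]
This uses $p(n-t-1,k-t-1)/p(n-t,k-t)\approx k/n\le 1/(Ak^2)$ against the number of $(t+1)$-sets $\binom{2t}{t+1}$-type bounds in the Kupavskii structure, which are at most $k^2$-ish. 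The factor $\tfrac12$ costs the extra factor $2$, hence $A>48$.

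\textbf{Step 3 (stability to exactness).} If $\mathcal F$ is not contained in a star, Step 2 gives $|\mathcal F|\le\frac12p(n-t,k-t)$, which is strictly less than the bound. If $\mathcal F$ is contained in a star with centre $T$, then $|\mathcal F|\le p(n-\sigma(T),k-t)\le p(n-t,k-t)$. Equality forces $\sigma(T)=t$ and $\mathcal F$ to be the full star. For $k\ge t+3$, we have $p(n-\sigma,k-t)<p(n-t,k-t)$ whenever $\sigma>t$, which gives uniqueness of $\mathcal T_{n,k,t}$. When $k-t\le 2$, ties can occur only through the stated degenerate coincidences, matching the "full $t$-star of maximum size" phrasing.
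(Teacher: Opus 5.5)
Your architecture is the paper's (occurrence-label encoding, weak spreadness of $\mathcal A_{n,k}$ from partition-ratio estimates, Kupavskii's theorem as a black box, Borg's largest-star lemma for uniqueness). However, the quantitative core of Step~1 is wrong, and you reach $k^3$ only because two errors cancel.

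The one-step degree ratio is $\Delta_{s+1}/\Delta_s=p(m,\le L-1)/p(m,\le L)\approx L(L-1)/m$, with $m=n-k$ and $L=k-s$. This is of order $k^2/n$, not $k/n$. You have transplanted the set ratio $\binom{n-1}{k-1}/\binom nk$, but for partitions the symmetry factors $k!$ versus $(k-1)!$ contribute an extra factor $k$. Hence (already for $t=1$) $\mathcal A_{n,k}$ is weakly $(r,t)$-spread only for $r\lesssim n/k^2$, which is about $Ak$ at $n=Ak^3$. It is \emph{not} weakly $(Ak^2,t)$-spread, so Step~1 as stated is false.

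Correspondingly, Kupavskii's theorem needs $\varepsilon r\ge 24k$ --- linear in $k$, with no dependence on $t$ --- not $r$ of order $k^2$. The constant $24$ comes from this correct pairing. The paper proves that every ratio $p(m,\le L)/p(m,\le L-1)$ with $2\le L\le k$ is at least $(1-o(1))m/L^2\ge(1-o(1))Ak$. It does this using $\Phi_L=-\log q(u)\ge\log(m/L^2)$ (from $q(u)\le u^2$) in the Szekeres--Canfield formula, and Borg's elementary bound for $L<k^{1/3}$ or $m>k^5$. Your proposed approximation $p(n,k)\approx\binom{n-1}{k-1}/k!$ with multiplicative error $1+O(k^3/n)$ determines the ratio only up to a bounded factor when $n=\Theta(k^3)$. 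So it cannot produce the sharp constant.

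There are further gaps.
\begin{itemize}
\item Your reduction $p(n-\sigma,k-q)\le p(n-q,k-q)$ fails at $q=k$: there $\Delta_k(\mathcal A_{n,k})=1$, while $p(n-k,0)=0$. Weak spreadness at $h=k-t$ therefore requires the separate estimate $p(m,\le k-t)\ge r^{k-t}$, which is the root bound in Lemma~\ref{lem:adjacent}.
\item For $t=k-1$ no $r>1$ works at all, since $\Delta_{k-1}=\Delta_k=1$. That case has to be settled directly: two members of $P_{n,k}$ sharing $k-1$ parts coincide.
\item In Step~3, the half bound excludes nontrivial extremal families only when $A>48$. For $24<A\le 48$ you must apply Kupavskii's theorem with $\varepsilon=24k/r<1$; this is exactly where the strict inequality $r>24k$ is used.
\item Drop the hand-made count in Step~2: a $\binom{2t}{t+1}$-type bound is exponential in $t$, not ``$k^2$-ish''. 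Theorem~\ref{thm:weak} directly gives $|\mathcal F|\le\varepsilon\Delta_t(\mathcal A_{n,k})$ for every nontrivial $t$-intersecting $\mathcal F$.
\end{itemize}
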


The canonical star is therefore a largest family. The qualification about maximum stars matters: when $t=k-1$, every singleton family is largest. Other ties are possible when $k-t=2$.

We also obtain the following result which disproves Borg's conjecture from~\cite[Conjecture~1.6]{Borg2016} and provides a lower bound on $n$ below which a general statement like 
that of Theorem~\ref{thm:main-upper} cannot hold.
Write $\supp(\lambda)=\{a\ge1:m_\lambda(a)>0\}$ for the set of
distinct part values of $\lambda$.

\begin{theorem}[Counterexamples at every scale]
\label{thm:every-scale}
For every sufficiently large $k$ and every integer $1\le t\le \lceil k/4\rceil$, put $n=\lfloor tk^2/3\rfloor$. 
Then the family $\mathcal F_{n,k,t}$, defined by
\[
 \mathcal F_{n,k,t}:=\{\lambda\in P_{n,k}:|\operatorname{supp}(\lambda)\cap[t+2]|\ge t+1\}
\]
is $t$-intersecting and larger than the canonical $t$-star $\mathcal{T}_{n,k,t}$.
\end{theorem}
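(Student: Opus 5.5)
The $t$-intersecting property comes straight from the definition, and the real work is a size comparison through sharp partition asymptotics. If $\lambda,\mu\in\mathcal F_{n,k,t}$, put $S_\lambda=\supp(\lambda)\cap[t+2]$ and $S_\mu=\supp(\mu)\cap[t+2]$. Both are subsets of $[t+2]$ of size at least $t+1$, so $|S_\lambda\cap S_\mu|\ge (t+1)+(t+1)-(t+2)=t$. Each common value contributes at least $1$ to $\min\{m_\lambda(a),m_\mu(a)\}$, hence $I(\lambda,\mu)\ge t$.

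For the size I would write $\mathcal F_{n,k,t}$ as a disjoint union. Let $\sigma=\binom{t+3}{2}=1+2+\dots+(t+2)$. Let $N_j$ be the number of $\lambda\in P_{n,k}$ with $[t+2]\setminus\{j\}\subseteq\supp(\lambda)$, so that
\[
N_j=p\bigl(n-\sigma+j,\;k-t-1\bigr),
\]
since we remove one copy of each value of $[t+2]\setminus\{j\}$. Let $N_{\ast}=p(n-\sigma,k-t-2)$ count those $\lambda$ with $[t+2]\subseteq\supp(\lambda)$. Then
\[
|\mathcal F_{n,k,t}|=N_\ast+\sum_{j=1}^{t+2}(N_j-N_\ast)\ \ge\ \sum_{j=1}^{t+2}N_j-(t+1)N_\ast .
\]
This is exact, and it avoids a crude Bonferroni bound, which loses too much. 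The goal is to show that this quantity exceeds $p(n-t,k-t)$.

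Every quantity is now of the form $p(m,r)$ with $m=n-O(t^2)$ and $r=k-t-O(1)$. I would express each one relative to $p(n-t,k-t)$ through ratios of the type
\[
q(m,r)=\frac{p(m-a,r-1)}{p(m,r)},\qquad \frac{p(m-1,r)}{p(m,r)},
\]
and evaluate these with the Szekeres asymptotics in Canfield's form. In the saddle-point description, $p(m,r)$ is governed by a parameter $v=v(m/r^2)$, and removing one part of size $a$ changes $p$ by a factor $\approx\rho\,e^{-\beta a}$ with explicit $\rho,\beta$ depending only on $m/r^2$.

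Here $m/r^2\approx t/3\cdot(1+O(t/k))$ lies in a range bounded below by $1/3-o(1)$ and above by $O(k)$. Shifts of size $O(t^2)\le O(k^2)$ in $m$ are small against $n\asymp tk^2$, so they only perturb the exponent by $e^{O(\beta t^2)}$, which I must track. The heuristic regime $n\gg k^2$ gives $\rho\approx k^2/n\approx 3/t$. The ratio then becomes
\[
\frac{|\mathcal F_{n,k,t}|}{p(n-t,k-t)}\approx (t+2)\rho-(t+1)\rho^2 ,
\]
with $\rho=p(n-\sigma+j,k-t-1)/p(n-t,k-t)$ suitably averaged over $j$. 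For large $t$ this is about $3$, comfortably above $1$.

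The main obstacle is the regime of small $t$, where $n\approx k^2/3$ and the naive $n^{k-1}/(k!(k-1)!)$ heuristic is wrong. Here one needs the genuine Szekeres saddle-point values of $\rho$ and the factors $e^{-\beta j}$ to show that $(t+2)\bar\rho-(t+1)\rho_\ast\rho>1$, with margin uniform in $t\le\lceil k/4\rceil$. I would treat $t$ bounded and $t\to\infty$ separately. For bounded $t$, I would compute the limiting constants at $m/r^2=t/3$ explicitly from Canfield's formula; the choice of the constant $1/3$ is presumably tuned exactly so this inequality holds. For growing $t$, I would use monotonicity of the ratio in $m/r^2$ together with the error terms to conclude, checking that the $O(t/k)$ corrections from $t\le k/4$ do not destroy the margin.
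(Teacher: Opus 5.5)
Your skeleton is the paper's: the intersection argument, the exact inclusion--exclusion count (it is an identity, not just a lower bound), Szekeres--Canfield ratio asymptotics, and a split into bounded and growing $t$. The gap is that the quantitative step, which is the actual content of the theorem, is left undone, and your heuristics misjudge it.

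What is needed is a comparison that is uniform in $t$. Put $d=k-t$, $m=n-k$, $m/d^2=I(v)/v^2$, $q=1-e^{-v}$ and $E_0=\binom{t+2}{2}$. One shows $p(m-E,\le d-a)/p(m,\le d)=(1+o(1))\,q^a e^{-Ev/d}$ for $a\in\{1,2\}$ and $0\le E\le E_0$, whence
\[
 \frac{|\mathcal F_{n,k,t}|}{p(n-t,k-t)}=(1+o(1))\,e^{-vE_0/d}\bigl((t+2)q-(t+1)q^2\bigr).
\]
The subtracted term carries one factor $e^{-vE_0/d}$, not two as in your first heuristic $(t+2)\rho-(t+1)\rho^2$. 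Uniformity needs Taylor bounds on the exponent $\sqrt M\,g(L/\sqrt M)$ and on $\log f$ that hold for all $u=L/\sqrt M\in(0,U]$. This matters because $u\asymp t^{-1/2}$ ranges from order one down to $o(1)$.

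Your claim that for large $t$ the ratio is ``about $3$, comfortably above $1$'' holds only for $t=o(k)$. If $t/k\to\beta$, then $q\sim v\sim d^2/m\sim 3(1-\beta)^2/t$ and $vE_0/d\to\frac32\beta(1-\beta)$. So the ratio tends to
\[
 G(\beta)=3(1-\beta)^2e^{-3\beta(1-\beta)/2},
\]
which decreases on $[0,1/4]$ to $G(1/4)=\frac{27}{16}e^{-9/32}\approx1.27$. The factors $(1-\beta)^2$ and $e^{-3\beta(1-\beta)/2}$ are leading-order. They are not $O(t/k)$ perturbations that a margin of $3$ can absorb: with $n=\lfloor tk^2/2\rfloor$ the same computation gives $\frac98e^{-3/16}<1$ at $\beta=1/4$. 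So the constants $1/3$ and $1/4$ are tuned in this regime, not in the small-$t$ one.

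Bounded $t$ is the easy case, contrary to your expectation. There the limit $(t+2)q-(t+1)q^2$ (with $I(v)/v^2=ct$) exceeds $1$ iff $q>1/(t+1)$, i.e.\ iff
\[
 c<c_t=\frac{I(\log(1+1/t))}{t\log^2(1+1/t)}.
\]
Since $x/(e^x-1)$ is decreasing, $c_t>1$, so any $c<1$ would do.

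Finally, a single $k_0$ valid for all $t\le\lceil k/4\rceil$ requires the estimate above to be uniform in $t$, followed by a compactness step. Take a putative failing sequence and pass to subsequences: either some value $t_j=t_*$ recurs infinitely often, or $t_j\to\infty$ with $t_j/k_j\to\beta\in[0,1/4]$. Each case contradicts the corresponding limit exceeding $1$.
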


In particular we obtain $t$-intersecting families larger than canonical $t$-star for $n=\Theta(tk^2)$ with absolute implied constants. Thus, this covers every `scale' for $t$ from constant $1$-intersection to intersection $t$ proportional to the length $k$. We also provide small counterexamples of intersecting families with  $620$ and $626$ partitions in $P_{31,8}$, compared with $618$ in the canonical star. 

Regarding integer partitions without restrictions on the number of parts, Borg conjectured that the star $\mathcal{T}_{n,t}$  is largest for $t=1$ and, more generally, for every $t\ge1$~\cite[Conjectures~1.1 and~1.4]{Borg2016}. We prove that Borg's conjectures fail for all large $n$ and all $t\ge 1$ whenever $n-t$ is large. 

\subsection*{Proof strategy.}
We combine the spread approximation method developed by Kupavskii and Zakharov~\cite{KZ2024} and refined by Kupavskii~\cite{Kupavskii2026} with uniform estimates for the number of integer partitions into a prescribed number of parts, obtained from Szekeres' formula~\cite{Szekeres1951,Szekeres1953} in the form proved by Canfield~\cite{Canfield1997}. To construct counterexamples for integer partitions with $k$ parts, we derive explicit estimates for ratios of restricted partition numbers from the Szekeres--Canfield formula and apply them to count partitions in families modeled on Frankl's constructions~\cite{Frankl1978}.  To construct counterexamples for unrestricted integer partitions, we use estimates on the partition function $p(n)$ as provided by Nemes~\cite{Nemes2024}.

\subsection*{Organization}
We provide further notation and state Kupavskii's theorem we are going to use in Section~\ref{sec:tools}. In Section~\ref{sec:estimates} we state an elementary estimate of Borg~\cite[Lemma~2.2]{Borg2016} and the Szekeres-Canfield formula~\cite{Canfield1997} we are going to use along with the estimates for partition function. In Section~\ref{sec:upper} we prove Theorem~\ref{thm:main-upper}. Section~\ref{sec:lower} constructs  $t$-intersecting families (of partitions of $n$ into $k$ parts) larger than the canonical $t$-star and proves Theorem~\ref{thm:every-scale}. In Section~\ref{sec:counterexamples_unrestricted} we construct counterexamples to Borg's conjectures on  intersecting families of unrestricted integer partitions. In Section~\ref{sec:conclude} we make further remarks.

\subsection*{Statement on AI use.}
This work arose from applying spread approximation results of Kupavskii~\cite{Kupavskii2026} to integer partitions with a given number of parts. GPT-5.6 Sol was first used to find small counterexamples to Borg's conjecture~1.6~\cite{Borg2016}. The authors then used it to search for larger counterexamples until the general construction was obtained.  Later GPT-6.1 Sol found counterexamples to Borg's conjectures~1.1 and 1.4~\cite{Borg2016}. GPT-5.6 Sol and later GPT-6 Astra and GPT-6.1 Sol also performed the asymptotic calculations based on Szekeres' formula and Nemes's bounds on $p(n)$ and assisted with the exposition. The authors checked all of the proofs carefully.

\section{Occurrence labels and spread tools}\label{sec:tools}

We identify each integer partition with the multiset of its parts.
For a finite multiset $Q$ of positive integers, let $m_Q(a)$ denote the
multiplicity of $a$ in $Q$. We write $Q\subseteq\lambda$ if
\[
 m_Q(a)\le m_\lambda(a)\qquad\text{for every }a\ge1.
\]
Its cardinality and weight are
\[
 |Q|=\sum_{a\ge1}m_Q(a),\qquad
 \sigma(Q)=\sum_{a\ge1}a\,m_Q(a).
\]
The full star with centre $Q$ is
\[
 \mathcal S_Q(n,k)=\{\lambda\in P_{n,k}:Q\subseteq\lambda\}.
\]
Deleting $Q$ gives the exact identity
\begin{equation}\label{eq:multiset-star}
 |\mathcal S_Q(n,k)|=p(n-\sigma(Q),k-|Q|).
\end{equation}
We use $p(0,0)=1$, and set $p(N,L)=0$ when $N<L$, either argument is
negative, or $L=0<N$.

\subsection{Maximum degrees}
Following Borg~\cite[Remark~1.10]{Borg2016}, encode $\lambda\in P_{n,k}$ by the $k$-set
\begin{equation}\label{eq:occurrence}
 S_\lambda=\{(a,j)\in[n]\times[k]:1\le j\le m_\lambda(a)\},\qquad
 \mathcal A_{n,k}=\{S_\lambda:\lambda\in P_{n,k}\}.
\end{equation}
This map is injective and $|S_\lambda\cap S_\mu|=|\lambda\cap\mu|$.
Thus it preserves $t$-intersection and triviality.

Viewing a finite set family $\mathcal A$ as a hypergraph whose vertices
are the elements of the ground set and whose edges are the members of
$\mathcal A$, the degree of an $s$-element set $U$ is the number of edges
containing $U$. We write
\[
 \mathcal A[U]=\{A\in\mathcal A:U\subseteq A\},\qquad
 \Delta_s(\mathcal A)=\max_{|U|=s}|\mathcal A[U]|.
\]
Thus $\Delta_s(\mathcal A)$ is the maximum $s$-degree, or equivalently
the size of a largest full $s$-star.
We also write $\mathcal A[\mathcal S]=\bigcup_{U\in\mathcal S}\mathcal A[U]$.
The following is Borg's largest-star lemma~\cite[Lemma 3.2]{Borg2016}
in maximum-degree notation.

\begin{lemma}[Maximum degrees]\label{lem:degrees}
If $n\ge k$ and $0\le s\le k-1$, then
\begin{equation}\label{eq:degrees}
 \Delta_s(\mathcal A_{n,k})=p(n-s,k-s),
 \qquad \Delta_k(\mathcal A_{n,k})=1.
\end{equation}
\end{lemma}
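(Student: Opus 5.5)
The argument has two halves: first understand which $s$-sets $U$ of occurrence labels have positive degree and what that degree is, then maximise this quantity over all admissible centres.

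\emph{Translating degrees into stars.} An edge $S_\lambda$ contains $U$ only if $U$ is downward closed in the second coordinate, meaning that $(a,j)\in U$ forces $(a,i)\in U$ for all $i<j$. The reason is that $S_\lambda$ itself is downward closed. If $U$ is not of this form, then $\mathcal A_{n,k}[U]$ is empty. If it is, $U=S_Q$ for a unique multiset $Q$ with $|Q|=s$, and $U\subseteq S_\lambda$ holds exactly when $Q\subseteq\lambda$. Hence $|\mathcal A_{n,k}[U]|=|\mathcal S_Q(n,k)|=p(n-\sigma(Q),k-s)$ by \eqref{eq:multiset-star}. It therefore suffices to maximise $p(n-\sigma(Q),k-s)$ over multisets $Q$ of size $s$.

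\emph{Maximising over $Q$ when $s\le k-1$.} Every multiset of size $s$ has $\sigma(Q)\ge s$, with equality only for $Q=\{1^s\}$. The canonical choice gives exactly $p(n-s,k-s)$, and this is a valid value since $n-s\ge k-s\ge 1$. So the whole claim reduces to the monotonicity statement
\[
p(N',L)\le p(N,L)\qquad\text{for } N'\le N,
\]
with $L=k-s\ge1$, $N=n-s$ and $N'=n-\sigma(Q)$.

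\emph{The monotonicity step, which is the main point.} I would prove it by the injection $P_{N',L}\to P_{N,L}$ that adds $N-N'$ to the largest part. This map preserves the number of parts $L$ and the nondecreasing order, and it is injective because subtracting $N-N'$ from the largest part undoes it. The injection requires $L\ge1$. The case $N'<L$ is trivial, since then $p(N',L)=0$ by the stated convention. This settles $\Delta_s=p(n-s,k-s)$ for $0\le s\le k-1$; for $s=0$ the formula reads $|\mathcal A_{n,k}|=p(n,k)$.

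\emph{The case $s=k$.} Here a $k$-set is contained in at most one $k$-edge, namely itself. The degree $1$ is attained, for example, by $U=S_\lambda$ for $\lambda=(1^{k-1},n-k+1)$, which exists because $n\ge k$. Therefore $\Delta_k=1$.

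The only genuine obstacle is the zero-convention edge case $L=0$, which is exactly why $s=k$ is treated separately.
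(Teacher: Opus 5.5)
Your handling of downward-closed centres, the monotonicity injection, and the case $s=k$ is fine. The first step, however, contains a false claim. It is not true that $U\subseteq S_\lambda$ forces $U$ to be downward closed in the second coordinate, because a subset of a down-set need not be a down-set. For instance, $U=\{(1,2)\}$ is contained in $S_\lambda$ for every $\lambda$ with at least two ones, e.g.\ $\lambda=(1^{k-1},n-k+1)$ when $k\ge3$, so $\mathcal A_{n,k}[U]$ is not empty. Since $\Delta_s$ is a maximum over all $s$-subsets of $[n]\times[k]$, your reduction to multisets $Q$ leaves these non-downward-closed centres unbounded. As written, the upper bound $\Delta_s\le p(n-s,k-s)$ is therefore not proved.

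The fix replaces the emptiness claim with a containment, and this is exactly what the paper does. For an arbitrary $s$-set $U$, let $u_a=|\{j:(a,j)\in U\}|$ and let $Q$ be the multiset with $m_Q(a)=u_a$, so $|Q|=s$. If $U\subseteq S_\lambda$, then every label $(a,j)\in U$ satisfies $j\le m_\lambda(a)$. The $u_a$ labels of $U$ with first coordinate $a$ have distinct second coordinates, so $m_\lambda(a)\ge u_a$ for every $a$, and hence $Q\subseteq\lambda$. Thus $\mathcal A_{n,k}[U]\subseteq\mathcal A_{n,k}[S_Q]$, and by \eqref{eq:multiset-star},
\[
|\mathcal A_{n,k}[U]|\le p(n-\sigma(Q),k-s).
\]
The paper phrases this as deleting the $s$ prescribed parts, which is an injection into $P_{n-\sigma(Q),k-s}$. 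With this in place, your maximisation over $Q$, using $\sigma(Q)\ge s$ and monotonicity in the first argument, completes the proof.
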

\begin{proof}
Fix an $s$-set $U$ of occurrence labels and put
$u_a=|\{j:(a,j)\in U\}|$ and $V=\sum_a au_a$.
Every partition whose encoding contains $U$ has at least $u_a$ copies of
$a$.  Deleting these $s$ prescribed parts is an injection into
$P_{n-V,k-s}$.  Since $V\ge s$ and $p(N,k-s)$ is nondecreasing in $N$,
\[
 |\mathcal A_{n,k}[U]|\le p(n-V,k-s)\le p(n-s,k-s).
\]
Equality is attained by $U=\{(1,1),\ldots,(1,s)\}$.
For $s=k$, every $k$-set $U$ satisfies
\[
 |\mathcal A_{n,k}[U]|=
 \begin{cases}
 0,&U\notin\mathcal A_{n,k},\\
 1,&U\in\mathcal A_{n,k},
 \end{cases}
\]
since a $k$-set can contain $U$ only if it equals $U$.
As $n\ge k$, the partition
$\lambda=(\underbrace{1,\ldots,1}_{k-1},n-k+1)$ belongs to $P_{n,k}$,
and choosing $U=S_\lambda$ gives $\Delta_k(\mathcal A_{n,k})=1$.
\end{proof}

\subsection{Weak spreadness}
We use Kupavskii's notion of weak spreadness~\cite{Kupavskii2026},
expressed in terms of maximum degrees as in Wen and Lv~\cite{WenLv2026}.
\begin{definition}[Weakly $(r,t)$-spread families]
A $k$-uniform family $\mathcal A$ is \emph{weakly $(r,t)$-spread} if
\[
 \Delta_{t+h}(\mathcal A)\le r^{-h}\Delta_t(\mathcal A)
 \qquad(0\le h\le k-t).
\]
\end{definition}

The next result converts degree estimates into an intersection bound.
\begin{theorem}[Kupavskii~\cite{Kupavskii2026}]\label{thm:weak}
Let $0<\varepsilon\le1$, and let $\mathcal A$ be a weakly
$(r,t)$-spread $k$-uniform family.  If $\varepsilon r\ge24k$, every
nontrivial $t$-intersecting $\mathcal F\subseteq\mathcal A$ satisfies
\[
 |\mathcal F|\le\varepsilon\Delta_t(\mathcal A).
\]
\end{theorem}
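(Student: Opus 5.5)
The statement is Kupavskii's theorem, which the paper imports from~\cite{Kupavskii2026}; the following is how I would reconstruct a proof. The plan is a spread-approximation argument run inside the ambient family $\mathcal A$. Write $\Delta_s=\Delta_s(\mathcal A)$. Throughout, the hypothesis will be used in the form $\Delta_{t+h}\le r^{-h}\Delta_t$ with $r\ge 24k/\varepsilon$.

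A first attempt shows why a naive argument is not enough. Fix $A\in\mathcal F$. Every member of $\mathcal F$ meets $A$ in at least $t$ elements, so $\mathcal F\subseteq\bigcup_{U\subseteq A,|U|=t}\mathcal F[U]$. For each such $U$, nontriviality gives some $B\in\mathcal F$ with $U\not\subseteq B$. Every $F\in\mathcal F[U]$ satisfies $|F\cap B|\ge t>|U\cap B|$, so $F$ contains $U\cup\{x\}$ for some $x\in B\setminus U$. Hence $|\mathcal F[U]|\le k\Delta_{t+1}\le (k/r)\Delta_t$. The loss here is the factor $\binom{k}{t}$ from the union over $U$, so this naive bound must be replaced by an iterative one that never sums over all $t$-subsets of a fixed set.

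The refined step is a weighted maximisation. Among all sets $S$ with $|S|\ge t$ and $\mathcal F[S]\neq\emptyset$, choose one maximising $\phi(S)=|\mathcal F[S]|\,(r/2)^{|S|-t}$. Maximality makes $\mathcal F[S]$ ``$(r/2)$-spread relative to $S$'': for every $W\supsetneq S$,
\[
|\mathcal F[W]|\le (r/2)^{-(|W|-|S|)}|\mathcal F[S]|.
\]
Next I would run the branching argument that gives spread approximations as in~\cite{KZ2024}. Relative spreadness together with the $t$-intersection property should force every $F\in\mathcal F$ to contain at least $t$ elements of $S$. The reasoning: if $F$ met $S$ in fewer than $t$ elements, then every member of $\mathcal F[S]$ would have to meet $F\setminus S$, a set of size at most $k$. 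Relative spreadness then makes the proportion of $\mathcal F[S]$ doing so at most roughly $k\cdot 2/r<1$, a contradiction. Then $|S|=t$ would make $\mathcal F$ trivial, contradicting nontriviality, so $|S|\ge t+1$. Combining this with the global degree bound yields
\[
|\mathcal F|\le \phi(S)\le \sum_{h\ge1}(\text{number of relevant $S$ with }|S|=t+h)\,(r/2)^{h}\Delta_{t+h},
\]
and the right-hand side is small provided the number of relevant $S$ at each size is controlled.

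The main obstacle is that last counting step. Bounding the number of candidate sets $S$ reintroduces binomial factors, so one cannot simply pick a single $S$ and stop. Instead I would iterate: repeatedly extract a maximiser $S_i$, remove $\mathcal F[S_i]$, and form the family $\mathcal S=\{S_i\}$ of ``spread cores''. The goal is to prove that $\mathcal S$ is itself $t$-intersecting, with each $|S_i|\ge t+1$. The remaining members of $\mathcal F$ are then bounded by a tree-type count, where each level of branching costs a factor $k$ (the choice of an element of $B\setminus U$) and gains a factor $r^{-1}$ from spreadness. Summing the resulting geometric series with ratio about $24k/r\le\varepsilon$ should give $|\mathcal F|\le\varepsilon\Delta_t$. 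The constant $24$ comes from combining the factor-$2$ loss in the relative-spread step with the branching losses. If the bookkeeping gets tight, I would first try to set the threshold in $\phi$ at $r/4$ instead of $r/2$ and re-optimise the constants.
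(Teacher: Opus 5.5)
\textbf{There is a genuine gap}, exactly at the step you call the main obstacle, and the iteration you propose does not close it. The paper does not reprove this statement: it applies Kupavskii's Theorem~14 with $q=k$ and $\mathcal S=\mathcal F$, using $\mathcal A[\mathcal F]=\mathcal F$.

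The steps you actually carry out are correct. With $\tau=r/2>k$, the maximiser $S$ satisfies $|F\cap S|\ge t$ for every $F\in\mathcal F$, and $s=|S|\ge t+1$. The displayed chain starting with $|\mathcal F|\le\phi(S)$, however, has no justification. Even if it held, it would give only $|\mathcal F|\le\Delta_t/2$, not $\varepsilon\Delta_t$. What $S$ honestly yields is a cover of $\mathcal F$ by the sets $U\cup\{x\}$ with $U\subseteq S$, $|U|=t$, $x\in B_U\setminus U$. Each has $\mathcal F$-degree at most $\phi(S)/\tau$. With threshold $\tau=r/C$ this gives $|\mathcal F|\le\binom{s}{t}C^{-(s-t)}\frac{Ck}{r}\Delta_t$.
\begin{itemize}
\item For $t=1$ and $C=2$ this is at most $\frac{2k}{r}\Delta_1$, so your plan does settle the case $t=1$.
\item For general $t$ it does not. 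At $s=t+1$ the bound is $(t+1)\frac{k}{r}\Delta_t$ whatever $C$ you choose.
\item With $C=2$ and $s$ near $2t$ (when $2t\le k$), the bound is of order $\frac{2^t}{\sqrt t}\frac kr\Delta_t$.
\end{itemize}
So the single-core argument gives at best a threshold of order $tk$, not $24k$ uniformly in $t$.

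The iteration does not repair this. Extracting cores $S_i$ and showing $\{S_i\}$ is $t$-intersecting is the Kupavskii--Zakharov spread approximation. It pays off when all cores have size at most a small $q$, so that binomial losses in the final count are affordable. Here cores can have size up to $k$: a member of $\mathcal F$ can itself be a core. You would then have to bound $\sum_i|\mathcal F_i[S_i]|\le|\mathcal A[\{S_i\}]|$ for a nontrivial $t$-intersecting family of sets of size at most $k$. That statement contains the theorem itself as the special case $\{S_i\}=\mathcal F$, which is exactly how the paper invokes Theorem~14. So the reduction is circular.

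Your ``tree-type count'' with a factor $k$ per level has a second problem. It must be rooted at a $t$-set, because weak spreadness only controls degrees of sets of size at least $t$. Choosing that root reintroduces the $\binom kt$ loss from your first paragraph, and you do not say how to avoid it. The missing ingredient is a counting argument for nontrivial $t$-intersecting families whose loss is linear in $k$ and independent of $t$. That is the real content of Kupavskii's theorem, and your proposal does not supply it.
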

\begin{proof}
Apply \cite[Theorem 14]{Kupavskii2026} with $q=k$ and
$\mathcal S=\mathcal F$.  Our degree condition is its weak-spread
hypothesis, with a maximum $t$-star as reference.  Uniformity gives
$\mathcal A[\mathcal F]=\mathcal F$, so the cited theorem gives the claim.
\end{proof}

\begin{corollary}\label{cor:weak}
Let $\mathcal A$ be a weakly $(r,t)$-spread $k$-uniform family.
If $r\ge24k$, then every $t$-intersecting
$\mathcal F\subseteq\mathcal A$ has size at most $\Delta_t(\mathcal A)$.
If $r>24k$, equality requires a full $t$-star of maximum size.
If $r\ge48k$, every nontrivial such family has size at most
$\frac12\Delta_t(\mathcal A)$.
\end{corollary}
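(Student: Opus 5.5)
The plan is to derive all three assertions of Corollary~\ref{cor:weak} from Theorem~\ref{thm:weak} by choosing $\varepsilon$ suitably, and to handle trivial families separately.

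\emph{The bound.} Let $\mathcal F\subseteq\mathcal A$ be $t$-intersecting and assume $r\ge24k$. If $\mathcal F$ is trivial, it lies in some full $t$-star $\mathcal A[U]$ with $|U|=t$. Hence
\[
|\mathcal F|\le|\mathcal A[U]|\le\Delta_t(\mathcal A),
\]
directly from the definition of the maximum degree. If $\mathcal F$ is nontrivial, apply Theorem~\ref{thm:weak} with $\varepsilon=1$. The hypothesis $\varepsilon r=r\ge24k$ holds, so $|\mathcal F|\le\Delta_t(\mathcal A)$. Both cases give the first claim.

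\emph{Equality.} Now assume $r>24k$ and $|\mathcal F|=\Delta_t(\mathcal A)$.
\begin{itemize}
\item[(i)] Suppose $\mathcal F$ is nontrivial. Set $\varepsilon=24k/r$, so $0<\varepsilon<1$ and $\varepsilon r=24k$. Theorem~\ref{thm:weak} then gives $|\mathcal F|\le\varepsilon\Delta_t(\mathcal A)<\Delta_t(\mathcal A)$, a contradiction.
\item[(ii)] The strict inequality in (i) needs $\Delta_t(\mathcal A)>0$. If instead $\Delta_t(\mathcal A)=0$, weak spreadness forces every degree $\Delta_{t+h}$ to vanish, in particular $\Delta_k(\mathcal A)=0$, so $\mathcal A$ is empty. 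Then $\mathcal F$ is empty, and the claim is vacuous or degenerate. I expect this degenerate case to be the only subtle point; it should be dismissed by convention or by noting that $\mathcal A\ne\emptyset$ in all applications.
\item[(iii)] So $\mathcal F$ is trivial, contained in some $\mathcal A[U]$ with $|U|=t$. The chain $\Delta_t(\mathcal A)=|\mathcal F|\le|\mathcal A[U]|\le\Delta_t(\mathcal A)$ forces $\mathcal F=\mathcal A[U]$, and this star has maximum size.
\end{itemize}
Conversely, every full $t$-star is $t$-intersecting, since all its members contain $U$. So a full $t$-star of maximum size does attain the bound.

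\emph{The nontrivial bound.} If $r\ge48k$, apply Theorem~\ref{thm:weak} with $\varepsilon=\tfrac12$. Then $\varepsilon r\ge24k$, which yields $|\mathcal F|\le\tfrac12\Delta_t(\mathcal A)$ for every nontrivial $t$-intersecting $\mathcal F$.

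Nothing here is hard. The only care needed is choosing $\varepsilon<1$ so that the inequality becomes strict in the equality case, together with the remark in (ii) that $\Delta_t(\mathcal A)>0$.
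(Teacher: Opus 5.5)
Your proof is correct and follows the paper's argument: you bound trivial families by the full $t$-star containing them, and you apply Theorem~\ref{thm:weak} with $\varepsilon=1$, $\varepsilon=24k/r<1$ and $\varepsilon=\tfrac12$ for the three claims. The degenerate case in (ii) needs no convention. A nontrivial family is nonempty, so any $t$-subset of one of its members has degree at least one, which gives $\Delta_t(\mathcal A)\ge1$ and makes the strict inequality in (i) automatic.
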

\begin{proof}
A trivial family lies in a full $t$-star.  For a nontrivial family, use
Theorem~\ref{thm:weak} with $\varepsilon=1$ or $1/2$, respectively.
When $r>24k$, the choice $\varepsilon=24k/r<1$ excludes equality for
a nontrivial family.
\end{proof}

To apply the weak-spread bound, we need lower bounds on the ratios of
consecutive maximum degrees. Write $p(m,\le L)$ for the number of
partitions of $m$ into at most $L$ parts. By \eqref{eq:degrees},
\[
 \frac{\Delta_s(\mathcal A_{n,k})}{\Delta_{s+1}(\mathcal A_{n,k})}
 =\frac{p(n-s,k-s)}{p(n-s-1,k-s-1)}
 =\frac{p(m,\le L)}{p(m,\le L-1)},
\]
where $m=n-k$, $L=k-s$, and $0\le s\le k-2$.
The last equality follows by subtracting one from each positive part;
the excess $m$ is the same in the numerator and denominator.
We estimate these ratios using the Szekeres--Canfield formula.
Multiplying the resulting bounds controls the degrees up to $k-1$.
For the final degree $\Delta_k=1$, weak $(r,t)$-spreadness additionally
requires $\Delta_t\ge r^{k-t}$. With $L=k-t$, this is
$p(m,\le L)^{1/L}\ge r$, which we prove separately.

\section{Uniform bounds for partition ratios}\label{sec:estimates}

The conversion used above is the identity
\begin{equation}\label{eq:restricted}
 p(N,L)=p(N-L,\le L).
\end{equation}
We now derive lower bounds for the ratios
$p(m,\le L)/p(m,\le L-1)$, together with bounds of the form
$p(m,\le L)\ge r^L$ needed to control the final degree.
We first give an elementary bound due to Borg, then use the
Szekeres--Canfield formula to obtain sharper estimates.

\subsection{An elementary ratio bound}
Borg's estimate handles the ranges in which only a coarse bound is needed.
\begin{lemma}[Borg~\cite{Borg2016}]\label{lem:borg}
If $k\ge2$, $c\ge1$ are integers and $n\ge ck^3$, then
\[
 p(n,k)>c\,p(n,k-1)\ge c\,p(n-1,k-1).
\]
Consequently, if $q\ge2$ and $m+q\ge q^3$, then
\begin{equation}\label{eq:borg-ratio}
 \frac{p(m,\le q)}{p(m,\le q-1)}
 >\left\lfloor\frac{m+q}{q^3}\right\rfloor
 \ge\frac{m+q}{q^3}-1.
\end{equation}
\end{lemma}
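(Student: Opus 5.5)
Proving the first inequality comes down to building an injection. I would compare $p(n,k)$ with $p(n,k-1)$ by writing $p(n,k)=\sum_{j}$ over partitions grouped by the value of their largest part, or more simply by fibering: every partition $\mu\in P_{n,k-1}$ can be turned into many partitions in $P_{n,k}$ by splitting its largest part $b$ into two parts $b-i$ and $i$. Since $\mu$ has $k-1$ parts summing to $n$, its largest part satisfies $b\ge n/(k-1)$. I would take $i$ ranging over $1\le i\le$ something like $b/k$ and require that the new part $i$ is smaller than every other part, or else mark it in some recoverable way. That looks delicate.

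A cleaner plan uses counting bounds instead. From
\[
 \frac{1}{k!}\binom{n-1}{k-1}\le p(n,k)\le \frac{1}{k!}\binom{n+\binom{k}{2}-1}{k-1},
\]
one compares $p(n,k)$ with $p(n,k-1)$. The lower bound holds because each partition yields at most $k!$ compositions. The upper bound comes from the map sending $(a_1,\dots,a_k)$ to distinct parts $a_i+i-1$, which gives at most one composition per $k!$ orderings. Then
\[
 \frac{p(n,k)}{p(n,k-1)}\ge \frac{(k-1)!\binom{n-1}{k-1}}{k!\binom{n+\binom{k-1}{2}-1}{k-2}}
 =\frac{n-k+1}{k(k-1)}\prod_{j}\frac{n-1-j}{n+\binom{k-1}{2}-1-j}.
\]
The product is at least $(1-k^2/(2n))^{k}\ge 1-k^3/(2n)\ge 1-1/(2c)$ for $n\ge ck^3$. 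The leading factor is roughly $n/k^2\ge ck$. Hence the ratio exceeds $c$ once $k\ge2$. The case $k=2$ can be checked directly: $p(n,2)=\lfloor n/2\rfloor>c=c\,p(n,1)$ when $n\ge8c$.

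The main obstacle is the bookkeeping in this product estimate, especially for small $k$ such as $3$. I expect it to be routine. The second inequality $p(n,k-1)\ge p(n-1,k-1)$ is monotonicity: add $1$ to the largest part.

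For the consequence, I would use $p(m,\le q)=p(m+q,q)$ and $p(m,\le q-1)=p(m+q-1,q-1)$, both by \eqref{eq:restricted}. Put $N=m+q$ and $c=\lfloor N/q^3\rfloor\ge1$, so that $N\ge cq^3$. The first part then gives
\[
 p(N,q)>c\,p(N-1,q-1),
\]
which is \eqref{eq:borg-ratio}. The final inequality is $\lfloor x\rfloor\ge x-1$.
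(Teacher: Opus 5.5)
Your proposal is correct. Your derivation of \eqref{eq:borg-ratio} is exactly the paper's: put $N=m+q$ and $c=\lfloor N/q^3\rfloor\ge1$, apply the first assertion with $(n,k)=(N,q)$, and translate via \eqref{eq:restricted}.

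The genuine difference is the first assertion. The paper does not prove it; it quotes Borg's Lemma~2.2. You give a self-contained proof from the sandwich $\frac1{k!}\binom{n-1}{k-1}\le p(n,k)\le\frac1{k!}\binom{n+\binom k2-1}{k-1}$, and the bookkeeping you worried about does go through:
\begin{itemize}
\item The $k-2$ paired factors $\frac{n-1-j}{n-1-j+\binom{k-1}2}$, for $0\le j\le k-3$, all have denominator at least $n$, because $\binom{k-1}2-(k-2)=\binom{k-2}2\ge0$. Bernoulli then gives a product of at least $1-\frac{k^3}{2n}\ge1-\frac1{2c}$.
\item Your ``roughly $n/k^2$'' is an honest inequality: $\frac{n-k+1}{k(k-1)}\ge\frac n{k^2}$ as soon as $n\ge k^2-k$.
\item Together these give a ratio of at least $ck-\frac k2$, which strictly exceeds $c$ for $k\ge3$.
\item For $k=2$ this chain gives only $2c-1$, which is not strict at $c=1$. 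So your ``exceeds $c$ once $k\ge2$'' is slightly too quick, and your separate check $\lfloor n/2\rfloor\ge4c>c$ is genuinely needed. Alternatively, note that the product is empty when $k=2$, which leaves $\frac{n-1}2>c$.
\end{itemize}

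What each route buys: the citation keeps the paper short. Your argument is elementary and also quantitatively stronger, since it bounds the ratio by $\frac{n}{k^2}\bigl(1-\frac{k^3}{2n}\bigr)$, of order $n/k^2$, rather than merely by $c$. In the regime $n\ge ck^3$ relevant to Theorem~\ref{thm:main-upper}, this gives $(c-\tfrac12)k$. That is comparable to the $(c-o(1))k$ the paper extracts from the Szekeres--Canfield formula in Lemma~\ref{lem:adjacent}, at the cost of a slightly worse constant.
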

\begin{proof}
The first assertion is \cite[Lemma 2.2]{Borg2016}.  Apply it with
$n=m+q$, $k=q$ and $c=\lfloor(m+q)/q^3\rfloor$, and then use
\eqref{eq:restricted}.
\end{proof}

\subsection{Explicit identities and partition estimates}
To estimate the ratios $p(m,\le L)/p(m,\le L-1)$ more precisely, we use
Szekeres' formula~\cite{Szekeres1951,Szekeres1953} in the uniform form
proved by Canfield~\cite{Canfield1997}. We first introduce the notation
used in the formula and in the estimates below.

For $u>0$, let $v=v(u)>0$ be determined by
\begin{equation}\label{eq:saddle}
 I(v)=\int_0^v\frac{x}{e^x-1}\,dx,
 \qquad u^2=\frac{v^2}{I(v)}.
\end{equation}
The solution $v=v(u)>0$ is unique. Define
\begin{subequations}\label{eq:fg}
\begin{align}
 g(u)&=\frac{2v}{u}-u\log(1-e^{-v}),
       \label{eq:g-definition}\\
 f(u)&=\frac{v}{2^{3/2}\pi u}
       \left(1-e^{-v}-\frac{u^2e^{-v}}2\right)^{-1/2}.
       \label{eq:f-definition}
\end{align}
\end{subequations}
We also write
\begin{subequations}\label{eq:q-Phi-definitions}
\begin{align}
 q(u)&=1-e^{-v(u)},\label{eq:q-definition}\\
 \Phi(M,L)&=\sqrt M\,g\!\left(\frac L{\sqrt M}\right)
 \quad(M,L>0).\label{eq:Phi-definition}
\end{align}
\end{subequations}
At a point $(M,L)$, we use the scaled variable $u=L/\sqrt M$.
Subscripts denote partial derivatives: $\Phi_L=\partial\Phi/\partial L$
with $M$ held fixed, and $\Phi_M=\partial\Phi/\partial M$ with $L$
held fixed. Second partial derivatives are denoted by $\Phi_{LL}$,
$\Phi_{ML}$, and $\Phi_{MM}$.

\begin{theorem}[Canfield~\cite{Canfield1997}]\label{thm:Szekeres-Canfield}
For every fixed $\delta>0$, as $m\to\infty$ through positive integers,
uniformly over integers $L\ge1$,
\begin{equation}\label{eq:canfield}
 p(m,\le L)=\frac{f(u)}m
 \exp\!\left(\sqrt m\,g(u)
       +O(m^{-1/6+\delta}+L^{-1})\right),\qquad u=\frac L{\sqrt m},
\end{equation}
where the implied constant may depend on $\delta$.
The $L^{-1}$ term can be omitted when $L\ge m^{1/6}$.
\end{theorem}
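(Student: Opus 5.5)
Since this is the Szekeres--Canfield formula, which the paper imports from \cite{Canfield1997}, in the paper we would simply cite it. If we had to prove it, we would use the saddle-point method applied to the generating function
\[
 \sum_{m\ge0}p(m,\le L)x^m=\prod_{j=1}^{L}\frac1{1-x^j}.
\]
By Cauchy's formula,
\[
 p(m,\le L)=\frac1{2\pi}\int_{-\pi}^{\pi}e^{ms+im\theta}\prod_{j=1}^{L}\bigl(1-e^{-j(s+i\theta)}\bigr)^{-1}\,d\theta
\]
for any $s>0$. We choose $s$ as the saddle point, i.e.\ so that the mean of the associated Boltzmann distribution equals $m$:
\[
 \sum_{j\le L}\frac{j}{e^{js}-1}=m.
\]
In the scaled variable $v=sL$, an Euler--Maclaurin comparison of this sum with $s^{-2}\int_0^{v}\frac{x}{e^x-1}\,dx=s^{-2}I(v)$ turns the saddle equation into $m\approx L^2I(v)/v^2$. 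This is exactly \eqref{eq:saddle} with $u=L/\sqrt m$.

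Next we would evaluate the integrand at $\theta=0$. The same Euler--Maclaurin comparison gives
\[
 ms-\sum_{j\le L}\log(1-e^{-js})=\sqrt m\,g(u)+(\text{boundary terms}).
\]
Here the main part of $g$ comes from $ms=\sqrt m\,(v/u)$ and from integrating $-\log(1-e^{-x})$ over $[0,v]$. An integration by parts rewrites the latter as $v/u+\dots-u\log(1-e^{-v})$ in the normalised form. The boundary terms of Euler--Maclaurin at $j=0$ produce the factor $\sqrt{s/2\pi}$-type contribution. The variance
\[
 \sigma^2=\sum_{j\le L} j^2e^{js}/(e^{js}-1)^2
\]
produces the Gaussian factor $(2\pi\sigma^2)^{-1/2}$. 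Combining the two and expressing them in $u$ and $v$ should give the prefactor $f(u)/m$. The quantity $1-e^{-v}-u^2e^{-v}/2$ comes from $\sigma^2 s^3$ after the same integral substitution.

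The remaining analytic step is to show that the integral localises to $|\theta|\le m^{-3/4+\delta}$-type windows around $0$. We would do this with a standard bound on
\[
 \bigl|\prod_j(1-e^{-js})/(1-e^{-j(s+i\theta)})\bigr|,
\]
using $\sum_j \log|\cdot|\le -c\sum_j e^{-js}(1-\cos j\theta)$. Inside the window, a third-order Taylor expansion gives relative error $O(\kappa_3\theta^3)$. This is where the $m^{-1/6+\delta}$ term arises after optimising the window width. The Euler--Maclaurin remainder with only $L$ summands is where the $O(L^{-1})$ error arises.

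The main obstacle is \emph{uniformity in $L$}. When $u\to0$ (small $L$), $v$ is large and the continuum approximation degrades; the honest error is then the $L^{-1}$ term. When $u\to\infty$, $v\to0$ and the formula must degenerate to Hardy--Ramanujan. We would handle these regimes by tracking all error terms as explicit functions of $v$ and checking they stay bounded by $m^{-1/6+\delta}+L^{-1}$ throughout. The final clause of the statement is then immediate: if $L\ge m^{1/6}$, then $L^{-1}\le m^{-1/6}\le m^{-1/6+\delta}$, so that term is absorbed.
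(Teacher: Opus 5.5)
Citing \cite{Canfield1997}, as you propose, is exactly what the paper does: it imports the statement and gives no proof of its own. Your fallback sketch follows a different route from the cited one. Canfield works from the recurrence $p(m,\le L)=p(m,\le L-1)+p(m-L,\le L)$, whereas your Cauchy-integral approach is closer to Szekeres's original papers \cite{Szekeres1951,Szekeres1953}. As a proof plan it has three concrete defects.

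First, your regimes are reversed. Since $I(v)\le v$, \eqref{eq:saddle} forces $v\le u^2$, and in fact $v=u^2(1+O(u^2))$ by \eqref{eq:small-u-asymptotics}. For $L\ll\sqrt m$ the saddle point is $s\approx L/m$, so $v\approx L^2/m\to0$. The Hardy--Ramanujan regime is $u\to\infty$, where $v\to\infty$ and $I(v)\to\pi^2/6$. So the $L^{-1}$ term reflects the discreteness of a sum with only $L$ terms at \emph{small} $v$, not large $v$.

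Second, your prefactor is missing a factor. Euler--Maclaurin also has the endpoint term $\tfrac12\phi(L)=-\tfrac12\log(1-e^{-v})$ at $j=L$, where $\phi(x)=-\log(1-e^{-xs})$. One has $s^3\sigma^2\approx H(v)=2I(v)-v^2/(e^v-1)$, but $1-e^{-v}-u^2e^{-v}/2=(1-e^{-v})H(v)/(2I(v))$, so that quantity does not come from $\sigma^2s^3$ alone. Combining only $\sqrt{s/2\pi}$ with $(2\pi\sigma^2)^{-1/2}$ gives $I(v)/(2\pi m\sqrt{H(v)})$. By \eqref{eq:f-regular} this equals $\sqrt{q}\,f(u)/m$, which is off by a constant factor for bounded $u$ and by a factor $\asymp u$ as $u\to0$.

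Third, your linearised tail bound cannot give uniformity down to $L=1$. Since $\sum_{j\le L}e^{-js}(1-\cos j\theta)\le 2L$, it saves at most a factor $e^{-2cL}$ away from the peak. The central contribution, however, has relative size $\sigma^{-1}\asymp\sqrt L/m$ when $L\ll\sqrt m$. So for $L\lesssim\log m$ the bound does not localise the integral. You need the exact identity
$\log\bigl|(1-r)/(1-re^{-ij\theta})\bigr|=-\tfrac12\log\bigl(1+2r(1-\cos j\theta)/(1-r)^2\bigr)$ with $r=e^{-js}$. This is of size $\log(1/(js))$ when $js$ is small and $j\theta$ stays away from $2\pi\mathbb Z$. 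Alternatively, give a separate argument for small $L$. Likewise, the central window must scale like $\sigma^{-1}$, which is much smaller than $m^{-3/4}$ when $L\ll\sqrt m$.
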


Thus $\Phi$ is the exponent in the Szekeres--Canfield formula.
The next lemma gives explicit derivative identities and bounds for
this exponent and the prefactor.
\begin{lemma}\label{lem:smallu}
The function $v$ is continuously differentiable and $g$ is twice
continuously differentiable on $(0,\infty)$. The function $\Phi$ is
twice continuously differentiable for $M,L>0$, and
\begin{equation}\label{eq:saddle-derivatives}
 \Phi_L(M,L)=g'(u)=-\log q(u),\qquad \Phi_M(M,L)=\frac{v(u)}L,
 \qquad u=\frac L{\sqrt M}.
\end{equation}
For all $u>0$, the following bounds hold:
\begin{subequations}\label{eq:global-function-bounds}
\begin{align}
 0<v(u)&\le u^2,\label{eq:v-global-bound}\\
 0<q(u)&\le v(u)\le u^2,\label{eq:q-global-bound}\\
 0<v'(u)&\le\frac{2v(u)}u\le2u,\label{eq:v-prime-global-bound}\\
 |g''(u)|&\le\frac2u.\label{eq:g-second-global-bound}
\end{align}
\end{subequations}
The function $f$ has a positive continuous extension to $[0,\infty)$, with
\begin{equation}\label{eq:f-positive-extension}
 f(0)=\frac1{2\pi},\qquad f(u)>0\quad(u\ge0).
\end{equation}
As $u\to0$,
\begin{equation}\label{eq:small-u-asymptotics}
 v(u)=u^2(1+O(u^2)),\qquad q(u)=u^2(1+O(u^2)),
 \qquad f(u)=\frac{1+O(u^2)}{2\pi}.
\end{equation}
For every fixed $U>0$, there is a finite constant $C_U$, depending only
on $U$, such that
\begin{equation}\label{eq:log-f-uniform-bound}
 |(\log f)'(u)|\le C_Uu\qquad(0<u\le U).
\end{equation}
Equivalently, $(\log f)'(u)=O_U(u)$, where the implied constant is
independent of $u$.
\end{lemma}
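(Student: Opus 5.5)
The plan is to work directly from the implicit definition \eqref{eq:saddle} of $v$ and differentiate everything explicitly. First we establish regularity and the basic monotonicity. Write $h(v)=v^2/I(v)$. Since $I(v)\le v$ (the integrand is at most $1$) and $I'(v)=v/(e^v-1)$, we get
\[
 h'(v)=\frac{2vI(v)-v^3/(e^v-1)}{I(v)^2}.
\]
This is positive because $I(v)\ge \int_0^v x e^{-x}\,dx$. A cleaner route is to note that $x/(e^x-1)$ is decreasing, so $I(v)\ge v\cdot v/(e^v-1)$, whence $2I(v)>v^2/(e^v-1)$. Moreover $h(v)\to 0$ as $v\to0$, since $I(v)\sim v$ and so $h(v)\sim v$; and $h(v)\to\infty$ as $v\to\infty$, since $I\to\pi^2/6$. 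Hence $v\mapsto h(v)$ is a $C^\infty$ increasing bijection of $(0,\infty)$ with nonvanishing derivative. By the inverse function theorem, $v(u)=h^{-1}(u^2)$ is $C^1$ (indeed smooth) with $v'>0$. Then $g$ and $\Phi$ are $C^2$ as compositions. For the bounds: $I(v)\le v$ gives $u^2=v^2/I\ge v$, which is \eqref{eq:v-global-bound}. Next, $q=1-e^{-v}\le v$ gives \eqref{eq:q-global-bound}. Differentiating $u^2I(v)=v^2$ yields
\[
 v'=\frac{2uI}{2v-u^2v/(e^v-1)}=\frac{2v}{u}\cdot\frac{I}{2I-v^2/(e^v-1)}.
\]
The last factor is at most $1$ iff $I\ge v^2/(e^v-1)$, which is exactly the monotonicity inequality above. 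This proves \eqref{eq:v-prime-global-bound}, and combined with $v\le u^2$ it gives $v'\le 2u$.

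Next we compute $g'$. Differentiating \eqref{eq:g-definition},
\[
 g'=-\frac{2v}{u^2}+\frac{2v'}{u}-\log(1-e^{-v})-\frac{u\,v'}{e^v-1}.
\]
We claim the two $v'$-terms and the $-2v/u^2$ term cancel. To see this, use the relation $v'\bigl(2v-u^2v/(e^v-1)\bigr)=2uI$ together with $I=v^2/u^2$. Multiplying, this gives $v'\bigl(2/u-u/(e^v-1)\bigr)=2v/u^2$. Hence $g'(u)=-\log q(u)$. With $u=L/\sqrt M$ we have $\Phi_L=g'(u)$ immediately. For $\Phi_M$, compute $\frac12M^{-1/2}g-\frac12uM^{-1/2}g'$, i.e. $\frac{1}{2\sqrt M}(g-ug')=\frac{1}{2\sqrt M}\cdot\frac{2v}{u}=v/L$, which is \eqref{eq:saddle-derivatives}. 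Then $g''=-q'/q=-v'e^{-v}/(1-e^{-v})=-v'/(e^v-1)$, so $|g''|\le v'/v\le 2/u$ by \eqref{eq:v-prime-global-bound} and $e^v-1\ge v$. This is \eqref{eq:g-second-global-bound}.

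For small $u$ we expand. Near $0$, $I(v)=v-v^2/4+O(v^3)$, so $u^2=v(1+v/4+O(v^2))$. Inverting gives $v=u^2(1+O(u^2))$, and then $q=v-v^2/2+\dots=u^2(1+O(u^2))$. For $f$, set $D(u)=1-e^{-v}-\tfrac{u^2}{2}e^{-v}$. Since $u^2=v+v^2/4+O(v^3)$,
\[
 D=v-\tfrac{v^2}{2}-\tfrac{v}{2}-\tfrac{v^2}{8}+\tfrac{v^2}{2}+O(v^3)=\tfrac v2(1+O(v)).
\]
Then $f=\frac{v}{2^{3/2}\pi u}\bigl(\tfrac v2\bigr)^{-1/2}(1+O(v))=\frac{\sqrt v}{2\pi u}(1+O(u^2))=\frac{1+O(u^2)}{2\pi}$. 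This gives the extension with $f(0)=1/(2\pi)$.

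The main obstacle is positivity of $D(u)$ for all $u>0$, which is needed for $f$ to be well defined and positive. Rewrite $D>0$ as $e^v-1>u^2/2=v^2/(2I)$, i.e. $2I(e^v-1)>v^2$. I would try to prove this from $I\ge v^2/(e^v-1)$, which in fact gives $I(e^v-1)\ge v^2$ and hence the strict inequality with the factor $2$. Then $f$ is continuous and positive on $(0,\infty)$, and the extension is continuous at $0$. For \eqref{eq:log-f-uniform-bound}, note that $\log f=\log v-\log u-\tfrac12\log D+\text{const}$ is $C^1$ on $(0,U]$. The small-$u$ expansions, taken to one more order, show that $\log f$ is a smooth function of $u^2$ near $0$. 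The expansions are analytic in $v$, and $v$ is analytic in $u^2$ by the analytic inverse function theorem applied to $h$. Therefore $(\log f)'(u)=u\cdot\psi(u^2)$ with $\psi$ continuous on $[0,U^2]$, and compactness gives the constant $C_U$.
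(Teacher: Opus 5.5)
Your proposal is correct. For the regularity of $v$, the identities for $g'$, $\Phi_L$, $\Phi_M$ and $g''$, and the bounds on $v$, $q$, $v'$ and $g''$, it follows the paper's proof almost line by line. Everything rests on the inequality $v^2/(e^v-1)\le I(v)\le v$, which comes from the monotonicity of $x/(e^x-1)$ and gives $h'>0$, $v\le u^2$ and $v'\le 2v/u$; your denominator $2I-v^2/(e^v-1)$ is the paper's $H(v)$. You additionally check that $h$ maps $(0,\infty)$ onto $(0,\infty)$, so existence of $v(u)$ for every $u>0$ is actually proved.

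One caveat: your first justification of $h'>0$, via $I(v)\ge\int_0^v xe^{-x}\,dx$, does not suffice. Near $0$ it only yields $2I\ge v^2(1+O(v))$, whereas $v^2/(e^v-1)=v(1+O(v))$. Delete it and keep the ``cleaner route''.

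The genuine difference is in the treatment of $f$. The paper rewrites $f$ nonsingularly as $\frac1{2\pi}J(v)/\sqrt{K(v)(2J(v)-\rho(v))}$, with $J=I/v$, $K=(1-e^{-v})/v$ and $\rho=v/(e^v-1)$, all $C^1$ on $[0,\infty)$. This single formula yields positivity, $f(0)=1/(2\pi)$ and the $O(u^2)$ expansion. It also gives the bound $|(\log f)'|\le C_Uu$ on all of $(0,U]$ at once, via $(\log f)'(u)=D(v(u))\,v'(u)$ with $D$ continuous on $[0,U^2]$ and the already proved $v'\le 2u$.

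You instead prove positivity by the direct inequality $2I(e^v-1)>v^2$, which is shorter than the paper's factorization, and get the expansion by a direct Taylor computation. For the derivative bound you use analyticity of $v$ in $u^2$ (analytic inverse function theorem) plus compactness. That works, but it is heavier than necessary. Also, the phrase about expansions ``taken to one more order'' proves nothing by itself. The real point is the cancellation of the singular factors: since $v^2/u^2=I(v)$,
\[
f^2=I(v)\big/\bigl(8\pi^2(1-e^{-v}-\tfrac{v^2e^{-v}}{2I(v)})\bigr),
\]
so $\log f$ is a function of $v$ alone that is $C^1$ up to $v=0$. Once you make this explicit, the chain rule and $v'\le2u$ finish exactly as in the paper, with no appeal to analyticity.
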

\begin{proof}
\emph{The implicit parameter and its derivative.}
Write
\begin{equation*}\tag{P1}\label{eq:rho-definition}
 \rho(v)=\frac{v}{e^v-1}=I'(v),
\end{equation*}
and define
\begin{equation*}\tag{P2}\label{eq:H-definition}
 H(v)=2I(v)-v\rho(v).
\end{equation*}
The function $\rho$ is positive and strictly decreasing, with
$\rho(0)=1$ by continuous extension. Indeed, differentiating
\eqref{eq:rho-definition} gives
$\rho'(v)=(e^v(1-v)-1)/(e^v-1)^2<0$ for $v>0$.
Integrating the inequalities $\rho(v)\le\rho(x)\le1$ over
$0\le x\le v$ and using $I'=\rho$ from \eqref{eq:rho-definition} gives
\begin{equation*}\tag{P3}\label{eq:I-bounds}
 v\rho(v)\le I(v)\le v.
\end{equation*}
Using $v\rho(v)\le I(v)$ from \eqref{eq:I-bounds} in the definition
$H(v)=2I(v)-v\rho(v)$ from \eqref{eq:H-definition}, and noting that
$I(v)>0$ for $v>0$, we obtain
\begin{equation*}\tag{P4}\label{eq:H-lower-bound}
 H(v)\ge I(v)>0.
\end{equation*}
By the relation $u^2=v^2/I(v)$ from \eqref{eq:saddle} and the upper bound $I(v)\le v$
in \eqref{eq:I-bounds}, we obtain
\begin{equation*}\tag{P5}\label{eq:v-bound}
 0<v(u)\le u^2.
\end{equation*}
Since $q(u)=1-e^{-v(u)}$ by \eqref{eq:q-definition}
and $1-e^{-v}\le v$ for $v>0$, this also gives
\begin{equation*}
 0<q(u)=1-e^{-v(u)}\le v(u)\le u^2.
\end{equation*}
In particular, \eqref{eq:v-bound} proves $v(u)=O(u^2)$ with constant one.

Put $F(v)=v^2/I(v)$. This function is continuously differentiable on
$(0,\infty)$. Using $I'(v)=\rho(v)$ from \eqref{eq:rho-definition}
and $2I(v)-v\rho(v)=H(v)$ from \eqref{eq:H-definition}, we find
\[
 F'(v)=\frac{vH(v)}{I(v)^2}>0.
\]
The strict positivity follows from $H(v)\ge I(v)>0$ in \eqref{eq:H-lower-bound}.
The inverse function theorem therefore implies that
$v(u)=F^{-1}(u^2)$ is continuously differentiable for $u>0$.
We may consequently differentiate the implicit equation with respect to
$u$, obtaining
\[
 2u=\frac{vH(v)}{I(v)^2}\,v'(u),
\]
and hence
\begin{equation*}\tag{P6}\label{eq:v-prime}
 v'(u)=\frac{2uI(v)^2}{vH(v)}
       =\frac{2vI(v)}{uH(v)}.
\end{equation*}
The last equality uses $u^2I(v)=v^2$ from \eqref{eq:saddle}.
Inserting $H(v)\ge I(v)>0$ from \eqref{eq:H-lower-bound} into the formula
$v'(u)=2vI(v)/(uH(v))$ in \eqref{eq:v-prime}, and then using
$v(u)\le u^2$ from \eqref{eq:v-bound}, gives
\begin{equation*}\tag{P7}\label{eq:v-prime-bound}
 0<v'(u)\le\frac{2v(u)}u\le2u,
\end{equation*}
which proves $v'(u)=O(u)$ on all of $(0,\infty)$.

\emph{The identities for $g$ and the partial derivatives of $\Phi$.}
By \eqref{eq:rho-definition}, $q'/q=v'/(e^v-1)=\rho(v)v'/v$.
Using this identity, differentiate the definition \eqref{eq:g-definition}, written as
$g(u)=2v/u-u\log q(u)$:
\begin{align*}
 g'(u)
 &=\left(\frac2u-\frac{u\rho(v)}v\right)v'
     -\frac{2v}{u^2}-\log q(u)\notag\\
 &=\frac{H(v)}{uI(v)}v'-\frac{2v}{u^2}-\log q(u)
  =-\log q(u).\tag{P8}\label{eq:g-prime}
\end{align*}
In the second line, $u^2=v^2/I(v)$ from \eqref{eq:saddle} and
$H(v)=2I(v)-v\rho(v)$ from \eqref{eq:H-definition} turn the coefficient
of $v'$ into $H(v)/(uI(v))$. Substituting
$v'=2vI(v)/(uH(v))$ from \eqref{eq:v-prime} then cancels the two terms $2v/u^2$.

For $u=L/\sqrt M$, as in the definition of $\Phi$ in \eqref{eq:Phi-definition},
the chain rule gives
\[
 \frac{\partial u}{\partial L}=\frac1{\sqrt M},\qquad
 \frac{\partial u}{\partial M}=-\frac{u}{2M}.
\]
Using $g'(u)=-\log q(u)$ from \eqref{eq:g-prime}
and holding the other variable fixed in each case,
\begin{align*}
 \Phi_L(M,L)
 &=\sqrt M\,g'(u)\frac1{\sqrt M}
   =-\log q(u),\\
 \Phi_M(M,L)
 &=\frac{g(u)}{2\sqrt M}
     -\frac{u g'(u)}{2\sqrt M}
   =\frac{2v/u}{2\sqrt M}=\frac{v(u)}L.
\end{align*}
Here $g(u)=2v/u-u\log q(u)$ by \eqref{eq:g-definition}, while
$ug'(u)=-u\log q(u)$ by \eqref{eq:g-prime}. The logarithmic terms
therefore cancel in $g(u)-ug'(u)$, leaving $2v/u$. This proves
\eqref{eq:saddle-derivatives}.
Differentiating $g'(u)=-\log q(u)$ from \eqref{eq:g-prime} once more gives
\begin{equation*}\tag{P9}\label{eq:g-second}
 g''(u)=-\frac{q'(u)}{q(u)}
       =-\frac{v'(u)}{e^{v(u)}-1}.
\end{equation*}
Using $g''(u)=-v'(u)/(e^{v(u)}-1)$ from \eqref{eq:g-second},
the inequality $e^v-1\ge v$, and the bound
$v'(u)\le2v(u)/u$ from \eqref{eq:v-prime-bound}, we obtain
\begin{equation*}
 |g''(u)|\le\frac{v'(u)}{v(u)}\le\frac2u.
\end{equation*}
This proves $g''(u)=O(u^{-1})$ with constant two. Since $v$ is
continuously differentiable and $e^{v(u)}-1>0$ for $u>0$,
\eqref{eq:g-second} also shows that $g''$ is continuous on $(0,\infty)$.
Thus $g$ is twice continuously differentiable. By
\eqref{eq:saddle-derivatives}, both first partial derivatives of $\Phi$
are continuously differentiable for $M,L>0$. Hence $\Phi$ has continuous second partial
derivatives, as required for the Taylor formula used later.

\emph{Behaviour at zero and the prefactor bound.}
Taylor's formula gives $e^v-1=v(1+v/2+O(v^2))$.
Substituting this into $\rho(v)=v/(e^v-1)$ from \eqref{eq:rho-definition}
and using the reciprocal expansion $(1+z)^{-1}=1-z+O(z^2)$ as $z\to0$, we obtain
\[
 \rho(v)=\frac{1}{1+v/2+O(v^2)}=1-v/2+O(v^2).
\]
Integration gives $I(v)=v-v^2/4+O(v^3)$.
Using the reciprocal expansion once more in $u^2=v^2/I(v)$ from
\eqref{eq:saddle}, we find
\[
 u^2=\frac{v}{1-v/4+O(v^2)}
     =v\bigl(1+v/4+O(v^2)\bigr)=v+O(v^2).
\]
Since $v(u)\le u^2$ by \eqref{eq:v-bound}, the last error is $O(u^4)$, and therefore
\[
 v(u)=u^2+O(u^4).
\]
Since $v(u)\to0$ as $u\to0$, Taylor expansion of $e^{-v(u)}$ gives
\begin{align*}
 q(u)=1-e^{-v(u)}
 &=v(u)-\frac{v(u)^2}{2}+O(v(u)^3)\\
 &=v(u)+O(v(u)^2)=u^2+O(u^4).
\end{align*}

To handle the apparent singularities in $f$, introduce functions of
$v$ alone:
\[
 J(v)=\frac{I(v)}v,\qquad K(v)=\frac{1-e^{-v}}v.
\]
These functions, together with $\rho$, have continuously differentiable
extensions to zero. This follows directly from the integral identities
\[
 \rho(v)=\left(\int_0^1 e^{sv}\,ds\right)^{-1},\qquad
 J(v)=\int_0^1\rho(sv)\,ds,\qquad
 K(v)=\int_0^1e^{-sv}\,ds.
\]
Their values at zero are
\begin{equation*}\tag{P10}\label{eq:auxiliary-values-zero}
 \rho(0)=J(0)=K(0)=1.
\end{equation*}
By the definitions of $J$ and $K$, the definition of $\rho$ in
\eqref{eq:rho-definition}, and the relation $u^2=v^2/I(v)$ in
\eqref{eq:saddle}, we have
\[
 1-e^{-v}=vK(v),\qquad
 u^2=\frac{v}{J(v)},\qquad
 e^{-v}=K(v)\rho(v).
\]
For the last identity, note that
\[
 K(v)\rho(v)=\frac{1-e^{-v}}{e^v-1}=e^{-v}.
\]
Substitution and factoring out $vK(v)$ now give
\begin{align*}
 1-e^{-v}-\frac{u^2e^{-v}}2
 &=vK(v)-\frac{vK(v)\rho(v)}{2J(v)}\\
 &=\frac{vK(v)(2J(v)-\rho(v))}{2J(v)}.
\end{align*}
Also, $u^2=v/J(v)$ gives $v/u=\sqrt{vJ(v)}$.
Substituting these expressions into the formula for $f$ in
\eqref{eq:f-definition} cancels the factors $\sqrt v$
and gives the nonsingular expression
\begin{equation*}\tag{P11}\label{eq:f-regular}
 f(u)=\frac1{2\pi}\,
       \frac{J(v(u))}{\sqrt{K(v(u))(2J(v(u))-\rho(v(u)))}}.
\end{equation*}
All factors under the square root are positive: $K(v)>0$ and
$2J(v)-\rho(v)=H(v)/v\ge J(v)>0$, using \eqref{eq:H-definition}
for the equality and \eqref{eq:H-lower-bound} for the inequalities.
At zero, \eqref{eq:auxiliary-values-zero} gives $K(0)=1$ and
$2J(0)-\rho(0)=1$, so both factors remain positive.
Thus the expression on the right, regarded as a function of $v\ge0$,
is positive and continuously differentiable. Substituting
$\rho(0)=J(0)=K(0)=1$ from \eqref{eq:auxiliary-values-zero} into
\eqref{eq:f-regular} gives its value $1/(2\pi)$ at zero.
It is therefore $1/(2\pi)+O(v)$ near zero. Since $v(u)\le u^2$ by \eqref{eq:v-bound},
this proves the expansion for $f$ in \eqref{eq:small-u-asymptotics}.
Together with the continuity of $v(u)$ for $u>0$ and $v(u)\to0$ as
$u\to0$, the positive continuous expression \eqref{eq:f-regular}
also gives the extension asserted in \eqref{eq:f-positive-extension}.

Finally, logarithmic differentiation of \eqref{eq:f-regular} gives
\[
 (\log f)'(u)=D(v(u))v'(u),\qquad
 D(v)=\frac{J'(v)}{J(v)}-\frac{K'(v)}{2K(v)}
       -\frac{2J'(v)-\rho'(v)}{2(2J(v)-\rho(v))}.
\]
Here the primes in $D$ denote differentiation with respect to $v$.
Continuous differentiability at zero and positivity of the denominators show
that $D$ is continuous on $[0,U^2]$. Thus
$B_U:=\max_{0\le v\le U^2}|D(v)|<\infty$.
By \eqref{eq:v-bound}, $v(u)\le u^2\le U^2$ for $0<u\le U$.
Using $v'(u)\le2u$ from \eqref{eq:v-prime-bound}, we obtain
\begin{equation*}
 |(\log f)'(u)|\le 2B_Uu\qquad(0<u\le U),
\end{equation*}
which proves \eqref{eq:log-f-uniform-bound} with $C_U=2B_U$.
This proves every assertion.
\end{proof}

We now use the derivative bounds from Lemma~\ref{lem:smallu} to compare
the Szekeres--Canfield formula at consecutive values of $L$.
Together with Borg's bound from Lemma~\ref{lem:borg}, this yields the
estimates needed to control consecutive maximum degrees and the final degree.

\begin{lemma}[Consecutive degree ratios and the final degree]\label{lem:adjacent}
Fix $a>0$ and $0<\varepsilon<a$. For all sufficiently large $k$,
all integers $1\le s\le k$, all $m\ge ask^2$, and all $2\le L\le k$
satisfy
\begin{equation}\label{eq:adjacent}
 \frac{p(m,\le L)}{p(m,\le L-1)}\ge(a-\varepsilon)s,
 \qquad p(m,\le L)^{1/L}\ge(a-\varepsilon)s.
\end{equation}
\end{lemma}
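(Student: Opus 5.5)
The first inequality in \eqref{eq:adjacent} will be proved by splitting according to the size of $L$, using Lemma~\ref{lem:borg} for small $L$ and Theorem~\ref{thm:Szekeres-Canfield} for large $L$. The second inequality will be proved separately by a direct counting lower bound. Throughout, the right-hand sides do not depend on $m$ and $p(m,\le L)$ is nondecreasing in $m$, so I may harmlessly keep $m\ge ask^2$ general in the first part and reduce to $m=\lceil ask^2\rceil$ in the second. Note also that $u=L/\sqrt m\le k/\sqrt{ask^2}\le a^{-1/2}$ stays bounded, so Lemma~\ref{lem:smallu} applies with $U=a^{-1/2}$.

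\emph{Small $L$.} For $2\le L\le \tfrac12k^{2/3}$, I apply \eqref{eq:borg-ratio} with $q=L$. Its hypothesis $m+L\ge L^3$ holds because $m\ge ask^2\ge L^3$. This gives
\[
 \frac{p(m,\le L)}{p(m,\le L-1)}\ge\frac{ask^2}{L^3}-1\ge 8as-1\ge(a-\varepsilon)s.
\]

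\emph{Large $L$.} For $L\ge\tfrac12k^{2/3}$, both $m$ and $L$ tend to infinity with $k$, so the error term in \eqref{eq:canfield} is $o(1)$ uniformly. Taking the quotient of \eqref{eq:canfield} at $L$ and at $L-1$, the factor $1/m$ cancels. The exponent difference is, by the mean value theorem and \eqref{eq:saddle-derivatives},
\[
 \Phi(m,L)-\Phi(m,L-1)=-\log q(u^*)
\]
for some $u^*\le L/\sqrt m$. By \eqref{eq:q-global-bound} we have $q(u^*)\le (u^*)^2\le L^2/m$, so this difference is at least $\log(m/L^2)\ge\log(as)$. The prefactor ratio satisfies
\[
 \log f(u)-\log f(u-1/\sqrt m)=O_U\bigl(u/\sqrt m\bigr)=o(1)
\]
by \eqref{eq:log-f-uniform-bound}. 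Altogether the ratio is at least $as\,e^{-o(1)}\ge(a-\varepsilon)s$ once $k$ is large. The point to check here is uniformity: every error term must be controlled solely by $k\to\infty$, independently of $s$, $L$ and $m$. This holds because $L\ge\tfrac12 k^{2/3}$ and $m\ge ak^2$.

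\emph{The final degree.} Telescoping the ratio bound only gives $((a-\varepsilon)s)^{L-1}$, which is one factor short. So instead I use a crude count. Partitions of $m$ into at most $L$ parts include the compositions into $L$ nonnegative parts divided by $L!$, so
\[
 p(m,\le L)\ge\binom{m+L-1}{L-1}\Big/L!\ge\frac{m^{L-1}}{(L-1)!\,L!}.
\]
By Stirling, taking $L$-th roots gives $p(m,\le L)^{1/L}\ge(1-o(1))\,\dfrac{e^2m}{L^2}\,(mL)^{-1/L}$, and with $m=\lceil ask^2\rceil$ we have $m/L^2\ge as$. For $L\ge C\log k$ the factor $(mL)^{-1/L}\ge(ak^4)^{-1/L}$ is close to $1$, and the slack $e^2>1$ absorbs it. For $2\le L\le C\log k$ I use the extra slack $k^2/L^2\ge k^{3/2}$ in $m/L^2=ask^2/L^2$, which beats $(ak^4)^{1/L}\le (ak^4)^{1/2}$ only up to constants. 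So this range may need a slightly sharper, more direct treatment: for example $L=2$ gives $p(m,\le 2)^{1/2}\approx\sqrt{m/2}\ge k\sqrt{as/2}\ge as$ since $s\le k$, and $3\le L\le C\log k$ works similarly.

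I expect this bookkeeping for the final degree across the full range of $L$ to be the main obstacle, since it must combine the $L!$ factors correctly with the $m^{-1/L}$ loss uniformly in $s\le k$.
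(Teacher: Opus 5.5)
There is one genuine gap, in your small-$L$ case of the ratio estimate. With the cutoff $L\le\frac12k^{2/3}$, Borg's bound \eqref{eq:borg-ratio} only gives the bounded slack $ask^2/L^3\ge 8as$. The inequality $8as-1\ge(a-\varepsilon)s$ is equivalent to $(7a+\varepsilon)s\ge1$. For $s=1$ and $a<1/8$ (so $7a+\varepsilon<8a<1$) this fails. At $m=\lceil ak^2\rceil$, $L\approx\frac12k^{2/3}$, even the hypothesis $m+L\ge L^3$ of \eqref{eq:borg-ratio} is false.

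Since the lemma is asserted for every fixed $a>0$, your argument does not prove it as stated. It would suffice for the application in Section~\ref{sec:upper}, where $a=C>24$. The underlying reason is that Borg's bound is weaker than the true ratio, which is of order $m/L^2$, by a factor of order $L$. So it can only be used for $L\le k^{2/3}/\omega(k)$ with $\omega(k)\to\infty$. The fix is to lower the cutoff, e.g.\ to $L<k^{1/3}$ as in the paper. Then $(m+L)/L^3-1>ask-1\ge(a-\varepsilon)s$ for large $k$, uniformly in $s$. Your Szekeres--Canfield argument only needs $L-1\to\infty$, so it covers $L\ge k^{1/3}$ unchanged.

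Apart from this, your proof is correct and partly takes a different route from the paper. For the ratio with large $L$ you apply \eqref{eq:canfield} for all $m\ge ask^2$. This is legitimate, since the formula is uniform in $L$ and $\log m$ cancels in the quotient; the paper's restriction $m\le k^5$ is used only in \eqref{eq:root-prefactor-error}.

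The real difference is the final degree. The paper proves it with \eqref{eq:canfield}, via $g(u)/u\ge-\log q(u)$, using $m\le k^5$ to control $(\log m)/L$. It uses a crude count with the lossy bounds $\binom{N}{j}\ge(N/j)^j$ and $L!\le L^L$ only when $L<k^{1/3}$ or $m>k^5$. You instead reduce to $m=\lceil ask^2\rceil\le ak^3+1$ by monotonicity and keep the Stirling constants. The bounds $p(m,\le L)\ge m^{L-1}/((L-1)!\,L!)$ and $j!\le e\sqrt j\,(j/e)^j$ give $p(m,\le L)^{1/L}\ge\frac{e^2m}{L^2}(e^2m)^{-1/L}$. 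For $L\ge C\log k$ the factor $e^2$ absorbs $(e^2m)^{-1/L}\ge e^{-3/C-o(1)}$ once $C>3/2$. This is completely elementary and avoids Szekeres--Canfield in this part; your $e^2$ plays the role of the term $2I(v)/v\to2$ that the paper discards in \eqref{eq:root-main-term}.

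The range $2\le L\le C\log k$, which you flagged as the main obstacle, is not one except at $L=2$. For $L\ge3$ the loss is at most $(e^2m)^{1/3}=O_a(k)$, so $p(m,\le L)^{1/L}$ is at least a constant (depending on $a$ and $C$) times $sk/\log^2k$, which exceeds $as$ for large $k$. The case $L=2$ follows from $p(m,\le2)\ge m/2$, as you note.
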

\begin{proof}
First suppose $m\le k^5$ and $L\ge k^{1/3}$. The full formula
\eqref{eq:canfield}, with a fixed $0<\delta<1/6$, has a uniform
$o(1)$ error at $L$ and $L-1$: indeed $m\ge ak^2$ and
$L-1\ge\frac12k^{1/3}$ for sufficiently large $k$.
Write its logarithmic form as
\begin{equation*}\tag{P12}\label{eq:log-partition-remainder}
 \log p(m,\le j)=\Phi(m,j)+\log f(j/\sqrt m)-\log m+R_{m,j},
 \qquad \max_{j\in\{L-1,L\}}|R_{m,j}|=o(1).
\end{equation*}
All errors below are uniform over the current parameter range.

\emph{The ratio estimate.}
Let $x\in[L-1,L]$ and put $u_x=x/\sqrt m$.
Since $x\le k$ and $m\ge ask^2\ge ak^2$,
we have $0<u_x\le a^{-1/2}$. The identity $\Phi_L(m,x)=-\log q(u_x)$
from \eqref{eq:saddle-derivatives} and the bound $q(u_x)\le u_x^2$
from \eqref{eq:q-global-bound} give
\[
 \Phi_L(m,x)=-\log q(u_x)
 \ge-\log(u_x^2)=\log\frac m{x^2}
 \ge\log\frac m{L^2}.
\]
The last inequality uses $x\le L$. The fundamental theorem of calculus
therefore yields
\[
 \Phi(m,L)-\Phi(m,L-1)
 =\int_{L-1}^{L}\Phi_L(m,x)\,dx
 \ge\log\frac m{L^2}.
\]
For the prefactor we use $|(\log f)'(u)|\le C_Uu$ from
\eqref{eq:log-f-uniform-bound}, with $U=a^{-1/2}$ and a suitable constant
$C_a$ depending only on $a$:
\begin{align*}
 \left|\log f(L/\sqrt m)-\log f((L-1)/\sqrt m)\right|
 &\le C_a\int_{(L-1)/\sqrt m}^{L/\sqrt m} y\,dy\\
 &=C_a\frac{2L-1}{2m}\le C_a\frac Lm=o(1).
\end{align*}
Here $L/m\le1/(ak)$. Subtracting the two logarithmic partition
estimates in \eqref{eq:log-partition-remainder} cancels $-\log m$, and hence
\[
 \log\frac{p(m,\le L)}{p(m,\le L-1)}
 \ge\log\frac m{L^2}-o(1).
\]
Exponentiating gives
\[
 \frac{p(m,\le L)}{p(m,\le L-1)}
 \ge e^{-o(1)}\frac m{L^2}\ge e^{-o(1)}as,
\]
where the last inequality uses $m\ge ask^2$ and $L\le k$.

\emph{The root estimate.}
With $u=L/\sqrt m$, use $g(u)=2v/u-u\log q(u)$ from
\eqref{eq:g-definition}, $u^2=v^2/I(v)$ from \eqref{eq:saddle},
and $q(u)\le u^2$ from \eqref{eq:q-global-bound} to obtain
\begin{equation*}\tag{P13}\label{eq:root-main-term}
 \frac{g(u)}u
 =\frac{2v}{u^2}-\log q(u)
 =\frac{2I(v)}v-\log q(u)
 \ge-\log q(u)\ge\log\frac m{L^2}.
\end{equation*}
We used $I(v)>0$, which follows from its integral definition in
\eqref{eq:saddle}, to discard the first term. By Lemma~\ref{lem:smallu},
$f$ extends continuously to $[0,a^{-1/2}]$ and is positive there,
including $f(0)=1/(2\pi)$ by \eqref{eq:f-positive-extension}.
By enlarging $C_a$ if necessary, we have
$|\log f(u)|\le C_a$ for all $u\in[0,a^{-1/2}]$.
Since $m\le k^5$ and $L\ge k^{1/3}$,
\begin{equation*}\tag{P14}\label{eq:root-prefactor-error}
 \left|\frac{\log f(u)-\log m}{L}\right|
 \le\frac{C_a+5\log k}{k^{1/3}}=o(1).
\end{equation*}
Dividing \eqref{eq:log-partition-remainder} at $j=L$ by $L$ and using
$\Phi(m,L)/L=g(u)/u$, we apply the lower bound for $g(u)/u$ in
\eqref{eq:root-main-term} and the prefactor error estimate in
\eqref{eq:root-prefactor-error}. Together with $R_{m,L}/L=o(1)$, this gives
\[
 \frac1L\log p(m,\le L)\ge\log\frac m{L^2}-o(1).
\]
Exponentiating this last bound gives
$p(m,\le L)^{1/L}\ge e^{-o(1)}m/L^2\ge e^{-o(1)}as$.
Together with the ratio estimate above, this bounds both quantities in
\eqref{eq:adjacent} from below. The errors are uniform, so for
sufficiently large $k$ this is at least $(a-\varepsilon)s$.

If $L<k^{1/3}$ or $m>k^5$, Borg's bound \eqref{eq:borg-ratio} gives
a ratio greater than $(m+L)/L^3-1$. After division by $s$, this
lower bound is at least $ak-1$ when $L<k^{1/3}$, using $m\ge ask^2$,
and at least $k-1$ when $m>k^5$, using $L,s\le k$.
It therefore tends to infinity uniformly as $k\to\infty$.
For the root estimate, there are $\binom{m-1}{L-1}$ ordered
representations of $m$ as a sum of $L$ positive integers.
Since each partition into exactly $L$ parts gives rise to at most $L!$
such representations, we have
\[
 p(m,\le L)\ge p(m,L)\ge\frac1{L!}\binom{m-1}{L-1}.
\]
Using $\binom Nj\ge(N/j)^j$ and $L!\le L^L$, we obtain, for $L\ge3$,
\[
 p(m,\le L)^{1/L}
 \ge\frac{(m-1)^{(L-1)/L}}{L(L-1)^{(L-1)/L}}
 \ge\frac{(m-1)^{2/3}}{L^2}.
\]
The last inequality uses $(L-1)/L\ge2/3$ and
$(L-1)^{(L-1)/L}\le L$.
This is $\Omega_a(s^{2/3}k^{2/3})$ when $L<k^{1/3}$,
and $\Omega(k^{4/3})$ when $m>k^5$.
Finally, for $L=2$, we have $p(m,\le2)=\lfloor m/2\rfloor+1$, whose square root is
$\Omega_a(k\sqrt s)$. Since $s\le k$, after division by $s$ these
lower bounds are at least a positive constant depending only on $a$
times $k^{1/3}$ for $L\ge3$, and $k^{1/2}$ for $L=2$.
They therefore tend to infinity uniformly as $k\to\infty$.
Thus both quantities in \eqref{eq:adjacent} are at least
$(a-\varepsilon)s$ for all sufficiently large $k$, completing the proof.
\end{proof}

\section{Proof of Theorem~\ref{thm:main-upper}}\label{sec:upper}

\begin{proof}[Proof of Theorem~\ref{thm:main-upper}]
If $t=k-1$, two partitions sharing $k-1$ parts must coincide: their
remaining parts agree because the total sums agree.  Thus every such
family has size at most one, and every nonempty one is trivial.
Every singleton is a full $(k-1)$-star, so the equality statement also holds.

Suppose $L=k-t\ge2$ and put $m=n-k$.  Lemma~\ref{lem:degrees} gives
\[
 \Delta_{t+h}(\mathcal A_{n,k})=p(m,\le L-h)
       \quad(0\le h\le L-1),\qquad
 \Delta_k(\mathcal A_{n,k})=1.
\]
Choose constants $24<c<C<A$.  For all sufficiently large $k$,
$m\ge Ck^3$. Apply Lemma~\ref{lem:adjacent} with
$a=C$, $\varepsilon=C-c$, and $s=k$. Writing $r=ck$, we obtain
\[
 \frac{p(m,\le q)}{p(m,\le q-1)}\ge r\quad(2\le q\le L),
 \qquad p(m,\le L)\ge r^L.
\]
For $1\le h\le L-1$, multiplying the ratio bounds gives
\[
 \frac{\Delta_t(\mathcal A_{n,k})}
      {\Delta_{t+h}(\mathcal A_{n,k})}
 =\frac{p(m,\le L)}{p(m,\le L-h)}
 =\prod_{q=L-h+1}^{L}\frac{p(m,\le q)}{p(m,\le q-1)}
 \ge r^h.
\]
For $h=L$, the root bound gives
$\Delta_t(\mathcal A_{n,k})=p(m,\le L)\ge r^L\Delta_k(\mathcal A_{n,k})$.
Thus $\Delta_{t+h}(\mathcal A_{n,k})\le r^{-h}\Delta_t(\mathcal A_{n,k})$
for every $0\le h\le L$, with equality when $h=0$.
Hence $\cA_{n,k}$ is weakly $(r,t)$-spread.  Since $r>24k$,
Corollary~\ref{cor:weak} proves the size bound and shows that
equality holds precisely for full $t$-stars of maximum size.
If $k\ge t+3$, Borg's largest-star lemma
\cite[Lemma~3.2]{Borg2016} shows that the canonical star is
the unique such family.
If $A>48$, choose $48<c<C<A$ in the same argument; then $r>48k$
and the corollary gives the half-star bound.
\end{proof}

\begin{remark}
Using Theorem~1.3(i) of the preprint by Wen and Lv~\cite{WenLv2026},
the size bound and the equality characterization in
Theorem~\ref{thm:main-upper} extend to every fixed $A>2e$.
Indeed, choosing $2e<c<C<A$, the same partition estimates yield
weak $(r,t)$-spreadness with $r=ck>2ek$, allowing us to apply their theorem.
\end{remark}

\section{Proof of Theorem~\ref{thm:every-scale}}
\label{sec:lower}

The following construction is the first nontrivial complete-intersection pattern:
require all but at most one of a small collection of distinct part values.  This is  the $i=1$ pattern 
for $t$-intersecting set families mentioned in the introduction~\cite{Frankl1978}.

For $1\le t\le k-2$, put
\[
 C=[t+2],\qquad S=\frac{(t+2)(t+3)}2,
 \qquad
 \mathcal F_{n,k,t}
 =\{\lambda\in P_{n,k}:|\operatorname{supp}(\lambda)\cap C|
 \ge t+1\}.
\]

\begin{lemma}[The construction and its size]
\label{lem:construction-size}
The family $\mathcal F_{n,k,t}$ is $t$-intersecting, and
\begin{equation}
\label{eq:construction-size}
 |\mathcal F_{n,k,t}|
 =\sum_{i=1}^{t+2}p(n-S+i,k-t-1)
 -(t+1)p(n-S,k-t-2).
\end{equation}
\end{lemma}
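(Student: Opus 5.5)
The proof has two parts: the intersection property, which is a pigeonhole argument, and the size formula, which comes from a disjoint decomposition of the family and then the deletion identity \eqref{eq:multiset-star}.

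\emph{Intersection.} Let $\lambda,\mu\in\mathcal F_{n,k,t}$. Put $A=\operatorname{supp}(\lambda)\cap C$ and $B=\operatorname{supp}(\mu)\cap C$. Both are subsets of $C=[t+2]$ of size at least $t+1$. Inclusion--exclusion gives
\[
|A\cap B|\ge 2(t+1)-(t+2)=t .
\]
Each value $a\in A\cap B$ occurs in both partitions, so it contributes $\min\{m_\lambda(a),m_\mu(a)\}\ge1$ to $I(\lambda,\mu)$. Hence $I(\lambda,\mu)\ge t$.

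\emph{Counting.} I would split $\mathcal F_{n,k,t}$ by the exact trace $\operatorname{supp}(\lambda)\cap C$. Let $\mathcal G_0$ be the set of partitions with $C\subseteq\operatorname{supp}(\lambda)$. For $i\in C$, let $\mathcal G_i$ be the set of partitions with $\operatorname{supp}(\lambda)\cap C=C\setminus\{i\}$. These $t+3$ classes are pairwise disjoint and their union is $\mathcal F_{n,k,t}$.

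The class $\mathcal G_0$ is exactly the full star $\mathcal S_C(n,k)$ with centre $C$ viewed as a multiset. Its weight is $\sigma(C)=S$ and its size is $|C|=t+2$, so \eqref{eq:multiset-star} gives
\[
|\mathcal G_0|=p(n-S,k-t-2).
\]

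For $\mathcal G_i$, I would write it as the star $\mathcal S_{C\setminus\{i\}}(n,k)$ minus the partitions in that star that also contain $i$. Those partitions are precisely the ones containing all of $C$, that is, $\mathcal G_0$. The centre $C\setminus\{i\}$ has weight $S-i$ and size $t+1$. So \eqref{eq:multiset-star} gives
\[
|\mathcal G_i|=p(n-S+i,k-t-1)-p(n-S,k-t-2).
\]

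Summing over the classes gives
\[
|\mathcal F_{n,k,t}|=\sum_{i=1}^{t+2}p(n-S+i,k-t-1)-(t+2)\,p(n-S,k-t-2)+p(n-S,k-t-2).
\]
This equals \eqref{eq:construction-size}.

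There is no real obstacle here. The only care needed is in the conventions for $p(N,L)$ at degenerate arguments, such as $k-t-2=0$ or $n-S<0$. These cases are covered by the stated conventions: $p(0,0)=1$, and $p(N,L)=0$ when $N<L$, when either argument is negative, or when $L=0<N$. Under these conventions \eqref{eq:multiset-star} remains valid, so the bookkeeping in the degenerate cases goes through unchanged.
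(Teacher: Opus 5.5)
Your proof is correct and takes essentially the paper's approach. The intersection part is the same pigeonhole bound on two $(t+1)$-subsets of $C$. The count uses the same stars centred at $C\setminus\{i\}$, whose pairwise overlaps are exactly the partitions containing all of $C$; your split by exact trace $\operatorname{supp}(\lambda)\cap C$ is only a bookkeeping variant of the paper's ``counted $t+2$ times, so subtract $(t+1)p(n-S,k-t-2)$'' correction.
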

\begin{proof}
Two subsets of a $(t+2)$-set, each containing at least $t+1$ elements,
have at least $t$ common elements. Thus any two members have at least
$t$ common distinct part values. To count the family, sum over the
possible omitted value $i\in C$. Deleting the required copy of every
value in $C\setminus\{i\}$ gives $p(n-S+i,k-t-1)$ partitions.
Members containing all of $C$ have been counted $t+2$ times; deleting one
copy of every value in $C$ counts them as $p(n-S,k-t-2)$.
\end{proof}

For example,
\[
 |\mathcal F_{31,8,1}|
 =p(28,6)+p(27,6)+p(26,6)-2p(25,5)
 =391+331+282-2\cdot192=620,
\]
whereas the canonical star has $p(30,7)=618$ members.

To compare our construction with the canonical star, we divide the size
formula \eqref{eq:construction-size} by $p(n-t,k-t)$. In
Lemma~\ref{lem:uniform-size-ratio}, we estimate the resulting ratios of
partition numbers using the Szekeres--Canfield formula and the derivative
bounds from Lemma~\ref{lem:smallu}, now allowing both arguments to vary.

\subsection{A uniform comparison}

Write
\[
 I(v)=\int_0^v\frac{x}{e^x-1}\,dx.
\]
The function $I(v)/v^2$ decreases continuously from infinity to zero,
so the parameter $v$ in the next lemma is uniquely determined.

\begin{lemma}[Uniform size ratio]
\label{lem:uniform-size-ratio}
Fix $0<c_0<c_1$ and $\eta>0$. Uniformly for
$c\in[c_0,c_1]$, $n=\lfloor ctk^2\rfloor$,
$1\le t\le k-2$, and $d=k-t\ge\eta k$, put
\[
 m=n-k,\qquad E_0=\frac{(t+1)(t+2)}2,
 \qquad \frac{m}{d^2}=\frac{I(v)}{v^2},\qquad q=1-e^{-v}.
\]
Then, as $k\to\infty$,
\begin{equation}
\label{eq:uniform-size-ratio}
 \frac{|\mathcal F_{n,k,t}|}{p(n-t,k-t)}
 =(1+o(1))e^{-vE_0/d}
 \bigl((t+2)q-(t+1)q^2\bigr).
\end{equation}
\end{lemma}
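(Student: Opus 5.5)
Starting from the size formula \eqref{eq:construction-size}, I will rewrite every term through \eqref{eq:restricted} as a partition number into at most $L$ parts. We have $n-t-(k-t)=m$, so $p(n-t,k-t)=p(m,\le d)$. Also $p(n-S+i,k-t-1)=p(m-E_i,\le d-1)$ with $E_i=S-i-t-1$, where $S-(t+2)-t-1+(t+2)=E_0$ when $i=t+2$. In general $E_i=E_0+(t+2-i)$, so the shifts are $E_0,E_0+1,\dots,E_0+t+1$. Finally $p(n-S,k-t-2)=p(m-E_0-t-1+?,\le d-2)$, and I will compute the exact shift $E'=S-(k-t-2)-(n-m-k)\cdot0$. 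Concretely $n-S-(k-t-2)=m-(S-t-2)=m-E_0-(t+1)+\dots$; I will just record it as $m-E'$ with $E'=E_0+O(t)$.

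The ratio then becomes a sum of $t+2$ terms $p(m-E_0-j,\le d-1)/p(m,\le d)$ minus $(t+1)$ times one term $p(m-E',\le d-2)/p(m,\le d)$. The main input is a two-variable local comparison:
\[
\frac{p(m-E,\le d-h)}{p(m,\le d)}=(1+o(1))\,q^h e^{-vE/d},
\]
uniformly for $h\in\{1,2\}$ and $0\le E\le E_0+t+1$. The exponent $vE/d$ is bounded because $E_0\sim t^2/2$, $m\sim ctk^2$ and $d\asymp k$; this gives $v$ bounded in terms of $c_0,c_1,\eta$, and hence $vE_0/d=O(t^2/k)$, which is $O(k)$ and need not be bounded. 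To prove the comparison I will apply Theorem~\ref{thm:Szekeres-Canfield} at both points. Here $m\to\infty$ and $d\ge\eta k\ge m^{1/6}$, so the $O(\cdot)$ error is $o(1)$. The prefactor ratio $f(u')/f(u)\cdot m/(m-E)$ is $1+o(1)$ by \eqref{eq:log-f-uniform-bound}, because $u$ ranges in a compact set bounded away from zero and the change in $u$ is $O(t/\sqrt m+t^2/m^{3/2})=o(1)$. Note that $u=d/\sqrt m$ satisfies $u^2=v^2/I(v)$, consistent with the definition of $v$ in the statement.

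The main obstacle is the exponent difference $\Phi(m,d)-\Phi(m-E,d-h)$. I will expand it to first order using \eqref{eq:saddle-derivatives}: $\Phi_L=-\log q$ and $\Phi_M=v/d$. This gives $\Phi(m-E,d-h)-\Phi(m,d)=h\log q-vE/d+R$, where $R$ is a second-order Taylor remainder. I must show $R=o(1)$ even though $E$ can be of order $t^2\lesssim k^2$, which is comparable to $m/t$. The second derivatives scale like $\Phi_{MM}=O(1/(d\sqrt m)\cdot v'(u)u/\dots)$, which I expect to be $O(m^{-3/2})$ up to constants depending on the compact $u$-range. The cross term $\Phi_{ML}=O(1/m)$ and $\Phi_{LL}=g''(u)/\sqrt m=O(1/\sqrt m)$ by \eqref{eq:g-second-global-bound}. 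The remainder is then $O(E^2m^{-3/2}+hE/m+h^2/\sqrt m)$. With $E=O(t^2)$ and $m\asymp tk^2$ this is $O(t^{5/2}/k^3+t/k^2+k^{-1})=o(1)$, using $t\le k$. Checking these second-derivative bounds uniformly on the compact $u$-range, using the $C^2$ regularity from Lemma~\ref{lem:smallu}, is the step I will verify most carefully.

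Summing, the $j$-th positive term contributes $(1+o(1))qe^{-vE_0/d}e^{-vj/d}$, and $e^{-vj/d}=1+O(t/k)$. This factor is not $1+o(1)$ when $t\asymp k$, so I will need to recheck the exact shifts. Most likely the correct bookkeeping places all $t+2$ omitted-value terms symmetrically around $E_0$, or the statement's $(1+o(1))$ absorbs this only for $t=o(k)$. If a genuine $O(t/k)$ discrepancy remains, I will redo the indexing so that the leading shift is exactly $E_0$ and show that the sum $\sum_j e^{-vj/d}$ equals $(t+2)(1+o(1))$ uniformly. This holds because $vt/d=O(1/\sqrt{\,\cdot\,})$ once one notes that $v\asymp d^2/m\cdot$ bounded, so $vt/d\asymp dt/m\asymp 1/k\to0$. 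Indeed $v=O(u^2)=O(d^2/m)$, so $vt/d=O(dt/m)=O(1/k)$, and there is no obstacle. The same reasoning gives $vE_0/d=O(t/k)$, which is bounded. Combining the $t+2$ positive terms with the subtracted term $(1+o(1))(t+1)q^2e^{-vE_0/d}$ yields \eqref{eq:uniform-size-ratio}. The relative error is $o(1)$ provided $(t+2)q-(t+1)q^2\ge q>0$ dominates, which holds since $q<1$.
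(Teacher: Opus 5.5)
Your route is the paper's: rewrite each term via $p(N,L)=p(N-L,\le L)$, compare it with $p(m,\le d)$ using Szekeres--Canfield and a first-order Taylor expansion of $\Phi$ with $\Phi_L=-\log q$ and $\Phi_M=v/d$, then absorb the spread of shifts using $vt/d=O(1/k)$. However, the final step fails as you justify it.

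\textbf{The final step.} After summing you have $e^{-vE_0/d}\bigl(A_1-A_2+o(1)(A_1+A_2)\bigr)$ with $A_1=(t+2)q$ and $A_2=(t+1)q^2$. The error accumulates over $t+2$ terms, so it is of order $o(1)\cdot(t+2)q$. Knowing only $A_1-A_2\ge q$ bounds the relative error by $o(1)\cdot 2(t+2)$, which is not $o(1)$ once $t\to\infty$. In that regime $q\asymp u^2\asymp 1/t$, so $q\to0$ while $(t+2)q\asymp 1$. What you need is
\[
 A_1-A_2=q\bigl(1+(t+1)(1-q)\bigr)\ge(1-q)A_1,
\]
together with the uniform bound $1-q=e^{-v}\ge e^{-2/c_0}$. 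That bound follows from $v\le u^2=d^2/m\le 2/c_0$. This is exactly how the paper concludes, and it is the one point where the uniformity in $t$ genuinely matters.

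\textbf{Two further errors, both repairable.}
\begin{enumerate}
\item $u=d/\sqrt m$ is not bounded away from zero, since $u^2\asymp 1/t$. Because $g''(u)\sim -2/u$ as $u\to0$, $\Phi_{LL}=g''(u)/\sqrt M$ has size about $2/L\asymp 1/k$, not $O(m^{-1/2})$. Use $|g''(u)|\le 2/u$ to get $|\Phi_{LL}|\le 2/L$; this still gives $h^2|\Phi_{LL}|=O(1/k)$. Likewise, for the prefactor you should rely on $|(\log f)'(u)|\le C_U u$ on $(0,U]$ rather than on compactness away from $0$. Your bound $\Phi_{MM}=O(m^{-3/2})$ is fine, since $|\Phi_{MM}|\le L/M^2=uM^{-3/2}$.
\item Your shift bookkeeping is off. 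Since $S-t-1=E_0+1$, the positive terms are $p(m-E_i,\le d-1)$ with $E_i=E_0+1-i\in[E_0-t-1,E_0]$, not $E_0,\dots,E_0+t+1$. The subtracted term is exactly $p(m-E_0,\le d-2)$. This is harmless only because $vt/d=O(1/k)$, which you do eventually establish.
\end{enumerate}
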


\begin{proof}
Use $p(N,L)=p(N-L,\le L)$. The denominator is $p(m,\le d)$,
and the positive
terms in \eqref{eq:construction-size} are
$p(m-E_i,\le d-1)$ with $E_i=E_0+1-i$, and the subtracted term is
$p(m-E_0,\le d-2)$.

We compare $p(m-E,\le d-a)$ with $p(m,\le d)$ for $a\in\{1,2\}$ and
integers $0\le E\le E_0$; this covers all terms above.
All implicit constants below may depend on $c_0,c_1,\eta$, but not on
$c,t,k,E,a$ or $i$.

\emph{The rectangles and the range of Lemma~\ref{lem:smallu}.}
For sufficiently large $k$, uniformly in the parameters,
\[
 \frac{c_0}{2}tk^2\le m\le c_1tk^2,\qquad
 \eta k\le d\le k,\qquad E_0\le3t^2.
\]
In particular, $E_0/m=O(t/k^2)=O(k^{-1})$ and $2/d=O(k^{-1})$.
For each displacement $(-E,-a)$, define the closed rectangle
\[
 \mathcal R_{E,a}=[m-E,m]\times[d-a,d].
\]
This is the ``Taylor rectangle'': it contains both endpoints and the
straight segment joining them. All these rectangles lie in the common
rectangle
\[
 \mathcal R=[m-E_0,m]\times[d-2,d].
\]
At every real point $(M,L)\in\mathcal R$, for sufficiently large $k$,
\[
 \frac m2\le M\le m,\qquad \frac d2\le L\le d,\qquad
 0<u:=\frac L{\sqrt M}\le\frac{2}{\sqrt{c_0t}}
 \le U:=\frac2{\sqrt{c_0}}.
\]
Thus the same fixed $U$ in Lemma~\ref{lem:smallu} works throughout
every rectangle. The variables $M,L$ may vary continuously here;
the partition function itself will only be evaluated at integer endpoints.

\emph{First and second derivatives of the exponent.}
Recall $\Phi(M,L)=\sqrt M\,g(L/\sqrt M)$ from \eqref{eq:Phi-definition}.
The identities for $\Phi_L=g'$ and $\Phi_M$ in
\eqref{eq:saddle-derivatives} give, at a variable point of the rectangle,
\[
 \Phi_L(M,L)=g'(u)=-\log(1-e^{-v(u)}),\qquad
 \Phi_M(M,L)=\frac{v(u)}L.
\]
The symbols $v$ and $q$ without arguments in the statement of the lemma
refer to the base point: $v=v(d/\sqrt m)$ and $q=1-e^{-v}$.
Thus $\Phi_L(m,d)=-\log q$ and $\Phi_M(m,d)=v/d$.

Using $u_L=1/\sqrt M$ and $u_M=-u/(2M)$, differentiate once more:
\begin{align*}
 \Phi_{LL}(M,L)&=\frac{g''(u)}{\sqrt M},\\
 \Phi_{ML}(M,L)&=-\frac{u g''(u)}{2M},\\
 \Phi_{MM}(M,L)&=-\frac{u v'(u)}{2ML}.
\end{align*}
These subscripts denote the indicated second partial derivatives.
The bounds $|g''(u)|\le2/u$ from \eqref{eq:g-second-global-bound}
and $0<v'(u)\le2u$ from \eqref{eq:v-prime-global-bound}
imply the explicit estimates
\[
 |\Phi_{LL}(M,L)|\le\frac2L,\qquad
 |\Phi_{ML}(M,L)|\le\frac1M,\qquad
 |\Phi_{MM}(M,L)|\le\frac{L}{M^2}.
\]
For the last estimate we used $u^2=L^2/M$.
Since $M\ge m/2$ and $d/2\le L\le d$, this gives uniformly on
$\mathcal R$
\[
 |\Phi_{LL}|\le\frac4d,\qquad
 |\Phi_{ML}|\le\frac2m,\qquad
 |\Phi_{MM}|\le\frac{4d}{m^2}.
\]

\emph{The two-variable Taylor formula.}
To make the remainder explicit, restrict $\Phi$ to the segment by putting
\[
 \psi(s)=\Phi(m-sE,d-sa),\qquad 0\le s\le1.
\]
The chain rule gives
\begin{align*}
 \psi'(0)&=-E\Phi_M(m,d)-a\Phi_L(m,d)
             =-\frac{Ev}{d}+a\log q,\\
 \psi''(s)&=E^2\Phi_{MM}(m-sE,d-sa)
       +2Ea\Phi_{ML}(m-sE,d-sa)
       +a^2\Phi_{LL}(m-sE,d-sa).
\end{align*}
The mixed term has coefficient $2Ea$ because both coordinates change:
$2(-E)(-a)=2Ea$. Ordinary Taylor's formula with integral remainder,
\[
 \psi(1)=\psi(0)+\psi'(0)
 +\underbrace{\int_0^1(1-s)\psi''(s)\,ds}_{R_{E,a}},
\]
is precisely the first-order two-variable Taylor formula along this
displacement. It yields
\[
 \Phi(m-E,d-a)-\Phi(m,d)
 =a\log q-\frac{Ev}{d}+R_{E,a},
\]
where, since the whole segment lies in $\mathcal R$,
\begin{align*}
 |R_{E,a}|
 &\le\frac12\left(
 E^2\sup_{\mathcal R}|\Phi_{MM}|
 +2Ea\sup_{\mathcal R}|\Phi_{ML}|
 +a^2\sup_{\mathcal R}|\Phi_{LL}|\right)\\
 &\le\frac{2E^2d}{m^2}+\frac{2Ea}{m}+\frac{2a^2}{d}.
\end{align*}
Here the factor $1/2$ is $\int_0^1(1-s)\,ds$.
Each term is uniformly $O(k^{-1})$: indeed, $a\le2$, $E\le3t^2$,
$d\asymp k$, $m\asymp tk^2$, and $t\le k$ give
\[
 \frac{a^2}{d}=O(k^{-1}),\qquad
 \frac{Ea}{m}=O(t/k^2)=O(k^{-1}),\qquad
 \frac{E^2d}{m^2}=O(t^2/k^3)=O(k^{-1}).
\]

\emph{The prefactor and the partition asymptotic.}
The logarithm of the Szekeres--Canfield prefactor is
\[
 B(M,L)=\log f(L/\sqrt M)-\log M.
\]
Writing $h(u)=\log f(u)$, the estimate \eqref{eq:log-f-uniform-bound}
gives $h'(u)=O_U(u)$ and hence
\[
 B_L=\frac{h'(u)}{\sqrt M}=O_U(L/M),\qquad
 B_M=-\frac{u h'(u)}{2M}-\frac1M=O_U(M^{-1}).
\]
For clarity, if $|h'(u)|\le C_Uu$ on $(0,U]$, then
\[
 |B_L|\le C_U\frac LM,\qquad
 |B_M|\le\frac{1+(C_U/2)u^2}{M}
          \le\frac{1+(C_U/2)U^2}{M}.
\]
Since $M\ge m/2$ and $L\le d$ throughout $\mathcal R$, there is a
constant $C$, independent of the varying parameters, such that
\[
 \sup_{\mathcal R}|B_M|\le\frac Cm,\qquad
 \sup_{\mathcal R}|B_L|\le\frac{Cd}{m}.
\]

Here is the integration step in detail. Define the one-variable function
\[
 b(s)=B(m-sE,d-sa),\qquad 0\le s\le1.
\]
The chain rule differentiates both coordinates:
\[
 b'(s)=-E B_M(m-sE,d-sa)-a B_L(m-sE,d-sa).
\]
The fundamental theorem of calculus therefore gives the exact identity
\begin{align*}
 B(m-E,d-a)-B(m,d)
 &=b(1)-b(0)\\
 &=-\int_0^1\bigl(E B_M(m-sE,d-sa)
                 +a B_L(m-sE,d-sa)\bigr)\,ds.
\end{align*}
Every point of this segment lies in $\mathcal R$. Taking absolute values,
using the triangle inequality inside the integral, and then the uniform
bounds on $\mathcal R$, we obtain
\begin{align*}
 |B(m-E,d-a)-B(m,d)|
 &\le\int_0^1\bigl(E|B_M(m-sE,d-sa)|
                     +a|B_L(m-sE,d-sa)|\bigr)\,ds\\
 &\le E\sup_{\mathcal R}|B_M|+a\sup_{\mathcal R}|B_L|\\
 &\le C\left(\frac Em+\frac{ad}{m}\right).
\end{align*}
The factors $E$ and $a$ are the coordinate displacements supplied by
the chain rule; the interval of integration has length one.
Finally, $E\le3t^2$, $a\le2$, $m\asymp tk^2$, and $d\le k$ give
\[
 \frac Em=O(t/k^2),\qquad \frac{ad}{m}=O(1/(tk)).
\]
Both are $O(k^{-1})$ because $1\le t\le k$. Thus the entire logarithmic
prefactor changes by $O(k^{-1})$, uniformly. Equivalently, the ratio of
the two prefactors is
\[
 \exp\bigl(B(m-E,d-a)-B(m,d)\bigr)=1+O(k^{-1}).
\]

Finally, choose a fixed $0<\delta<1/6$ in
\eqref{eq:canfield}. At either integer endpoint $(M,L)$ its logarithmic
form is
\[
 \log p(M,\le L)=\Phi(M,L)+B(M,L)
       +O(M^{-1/6+\delta}+L^{-1}).
\]
Both endpoints have $M\ge c_0tk^2/4$ and $L\ge\eta k/2$, so this
last error is $O(k^{-1/3+2\delta}+k^{-1})=o(1)$ uniformly.
Subtracting the two logarithmic formulas therefore gives
\[
 \log\frac{p(m-E,\le d-a)}{p(m,\le d)}
 =a\log q-\frac{Ev}{d}+o(1).
\]
Exponentiating, and using $e^{o(1)}=1+o(1)$ uniformly, proves
\begin{equation}
\label{eq:cylinder-ratio}
 \frac{p(m-E,\le d-a)}{p(m,\le d)}
 =q^a e^{-Ev/d}(1+o(1)).
\end{equation}

\emph{Summing the terms and controlling subtraction.}
The bound \eqref{eq:v-global-bound}, evaluated at $u=d/\sqrt m$,
also gives $v\le d^2/m=O(1/t)$.
Since $|E_i-E_0|=i-1\le t+1$, we have
\[
 \frac{|E_i-E_0|v}{d}=O(k^{-1}),\qquad
 e^{-E_iv/d}=e^{-E_0v/d}(1+O(k^{-1}))
\]
uniformly in $i$. Applying \eqref{eq:cylinder-ratio} to the exact count
thus gives
\[
 \frac{|\mathcal F_{n,k,t}|}{p(m,\le d)}
 =e^{-E_0v/d}\bigl(A_1-A_2+O(\delta_k(A_1+A_2))\bigr),
 \qquad
 A_1=(t+2)q,\quad A_2=(t+1)q^2,
\]
where $\delta_k\to0$ uniformly. To turn this into a relative estimate,
observe that $v\le d^2/m\le2/c_0$, and hence
\[
 1-q=e^{-v}\ge e^{-2/c_0}>0,\qquad
 A_1-A_2\ge A_1(1-q),\qquad A_1+A_2\le2A_1.
\]
It follows that the error divided by $A_1-A_2$ is
$O(\delta_k/(1-q))=o(1)$ uniformly. Since $p(m,\le d)=p(n-t,k-t)$,
this proves \eqref{eq:uniform-size-ratio}.
\end{proof}

We now evaluate the limiting size ratio for fixed $t$ and for growing
$t$ with $t/k\to\beta\in[0,1)$.

\begin{corollary}[Quantitative limits]
\label{cor:quantitative-limits}
The construction has the following limits.
\begin{enumerate}
\item For fixed $t\ge1$ and $c>0$, at $n=\lfloor ctk^2\rfloor$,
\begin{equation}
\label{eq:fixed-t-limit}
 \frac{|\mathcal F_{n,k,t}|}{p(n-t,k-t)}
 \longrightarrow (t+2)q-(t+1)q^2,
 \qquad ct=\frac{I(v)}{v^2},\quad q=1-e^{-v}.
\end{equation}
The limit exceeds one precisely when
\begin{equation}
\label{eq:fixed-t-critical}
 c<c_t:=\frac{I(\log(1+1/t))}{t\log^2(1+1/t)},
 \qquad c_t>1.
\end{equation}
\item If $t\to\infty$, $t/k\to\beta\in[0,1)$, and
$n=\lfloor ctk^2\rfloor$ for fixed $c>0$, then
\begin{equation}
\label{eq:growing-t-limit}
 \frac{|\mathcal F_{n,k,t}|}{p(n-t,k-t)}
 \longrightarrow
 \frac{(1-\beta)^2}{c}
 \exp\left(-\frac{\beta(1-\beta)}{2c}\right).
\end{equation}
In particular, the limit is $1/c$ when $t\to\infty$ and $t=o(k)$.
\end{enumerate}
\end{corollary}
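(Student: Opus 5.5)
We deduce both parts from Lemma~\ref{lem:uniform-size-ratio} with $c_0<c<c_1$ fixed. It remains to identify the limits of $v$, $q$, and $vE_0/d$ along the given sequences. In both regimes $d=k-t\ge\eta k$ for a fixed $\eta>0$: take $\eta=1/2$ for fixed $t$, and $\eta=(1-\beta)/2$ when $t/k\to\beta<1$. Hence \eqref{eq:uniform-size-ratio} applies, and the ratio equals $(1+o(1))e^{-vE_0/d}\bigl((t+2)q-(t+1)q^2\bigr)$. The only analytic input is that $v\mapsto I(v)/v^2$ is a continuous strictly decreasing bijection $(0,\infty)\to(0,\infty)$, so its inverse is continuous.

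\emph{Fixed $t$.} Here $m/d^2=(\lfloor ctk^2\rfloor-k)/(k-t)^2\to ct$. By continuity of the inverse, $v\to v_*$ with $I(v_*)/v_*^2=ct$, and $q\to1-e^{-v_*}$. Since $E_0$ is fixed and $d\to\infty$, we get $vE_0/d\to0$, which gives \eqref{eq:fixed-t-limit}. For the threshold, factor
\[
 (t+2)q-(t+1)q^2-1=\bigl((t+1)q-1\bigr)(1-q).
\]
Since $0<q<1$, the limit exceeds one if and only if $q>1/(t+1)$. This is equivalent to $e^{-v_*}<t/(t+1)$, that is, $v_*>v_0:=\log(1+1/t)$. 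By monotonicity of $I(v)/v^2$, this holds if and only if $ct<I(v_0)/v_0^2$, which is exactly $c<c_t$.

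To see $c_t>1$, use $I(v)>v\rho(v)$ for $v>0$. This is strict, since $\rho$ is strictly decreasing, as in the proof of Lemma~\ref{lem:smallu}. It gives $I(v_0)/v_0^2>\rho(v_0)/v_0=1/(e^{v_0}-1)=t$, so $c_t>1$.

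\emph{Growing $t$.} Now $d\sim(1-\beta)k$ and $m\sim ctk^2$, so $m/d^2\sim ct/(1-\beta)^2\to\infty$, and therefore $v\to0$. From the expansion $I(v)=v-v^2/4+O(v^3)$ derived in Lemma~\ref{lem:smallu} we get $I(v)/v^2=1/v+O(1)$. Hence $v=(d^2/m)(1+o(1))\sim(1-\beta)^2/(ct)$, and also $q=v(1+O(v))$. Consequently $(t+2)q\to(1-\beta)^2/c$ and $(t+1)q^2=O(1/t)\to0$. Moreover, using $E_0\sim t^2/2$,
\[
 \frac{vE_0}{d}\sim\frac{(1-\beta)^2}{ct}\cdot\frac{t^2}{2(1-\beta)k}
 =\frac{(1-\beta)t}{2ck}\to\frac{\beta(1-\beta)}{2c}.
\]
This yields \eqref{eq:growing-t-limit}, and setting $\beta=0$ gives $1/c$.

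The step requiring most care is this last one. Because $E_0v/d$ is of order one rather than $o(1)$, it is not enough to know $v\sim d^2/m$ loosely. We need the relative error of $v$ to be $o(1)$, so that multiplying by $E_0/d=\Theta(t)$ keeps the error $o(1)$. The expansion $1/v+O(1)$ gives a relative error $O(v)=O(1/t)$, which suffices. We also need the $o(1)$ of Lemma~\ref{lem:uniform-size-ratio} to be uniform along the sequence, which the lemma provides.
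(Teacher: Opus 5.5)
Your proposal is correct and follows essentially the same route as the paper. For part~(1) you use Lemma~\ref{lem:uniform-size-ratio} with continuity of the inverse of $I(v)/v^2$, the factorization $(t+2)q-(t+1)q^2-1=(1-q)((t+1)q-1)$, and the strict inequality $I(v_0)>v_0\rho(v_0)$. For part~(2) you use the small-$v$ expansion $v=(d^2/m)(1+O(1/t))$, so that the bounded exponent $vE_0/d$ carries only an $o(1)$ additive error.

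The differences are cosmetic. You invert $I(v)=v-v^2/4+O(v^3)$ directly instead of quoting $v(u)=u^2(1+O(u^2))$ from Lemma~\ref{lem:smallu}. Also, $E_0/d$ is really $O(t)$ rather than $\Theta(t)$ when $\beta=0$, and $O(t)$ is all your argument needs.
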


\begin{proof}
We use $v_k,q_k$ for the parameters in Lemma~\ref{lem:uniform-size-ratio}
at the current value of $k$, to distinguish them from their eventual
limits. In both parts,
\[
 m=\lfloor ctk^2\rfloor-k=ctk^2-k+O(1),\qquad
 \frac{m}{tk^2}\longrightarrow c.
\]
For fixed $t$, $d/k\to1$; in the second part, $d/k\to1-\beta>0$.
Thus $d\ge\eta k$ eventually for some fixed $\eta>0$ in each case,
so Lemma~\ref{lem:uniform-size-ratio} applies. Its condition on $c$ is
met, for example, by taking $c_0=c/2$ and $c_1=2c$.

\emph{Fixed $t$.}
Here $m/d^2\to ct$. The function $F(v)=I(v)/v^2$ is continuous and
strictly decreasing from infinity to zero, and hence has a continuous
inverse. Since $F(v_k)=m/d^2$, we obtain
\[
 v_k\longrightarrow v,\qquad F(v)=ct,\qquad
 q_k=1-e^{-v_k}\longrightarrow q=1-e^{-v}.
\]
Also $E_0$ is fixed and $d\to\infty$, so $v_kE_0/d\to0$ and
$e^{-v_kE_0/d}\to1$. Taking limits in
\eqref{eq:uniform-size-ratio} proves \eqref{eq:fixed-t-limit}.

To decide when the limiting ratio exceeds one, factor its difference
from one:
\[
 (t+2)q-(t+1)q^2-1=(1-q)((t+1)q-1).
\]
Since $0<q<1$, this is positive exactly when $q>1/(t+1)$.
Equivalently,
\[
 1-e^{-v}>\frac1{t+1}
 \quad\Longleftrightarrow\quad
 v>v_*:=\log(1+1/t).
\]
As $F$ is strictly decreasing, the latter condition is equivalent to
\[
 ct=F(v)<F(v_*)=t c_t,
\]
which proves the criterion $c<c_t$ in \eqref{eq:fixed-t-critical}.
At equality the limiting ratio is one; this limit alone makes no
assertion about the finite-$k$ comparison.

Finally, $\rho(x)=x/(e^x-1)$ is strictly decreasing, so
$\rho(x)>\rho(v_*)$ for $0\le x<v_*$. Integrating gives
\[
 I(v_*)=\int_0^{v_*}\rho(x)\,dx
 >v_*\rho(v_*)=\frac{v_*^2}{e^{v_*}-1}=t v_*^2.
\]
Dividing by $t v_*^2$ proves $c_t>1$.

\emph{Growing $t$.}
Now put $u_k=d/\sqrt m$. Since $d\le k$ and $m\sim ctk^2$,
\[
 u_k^2=\frac{d^2}{m}=O(1/t)\longrightarrow0.
\]
The expansions $v(u)=u^2(1+O(u^2))$ and $q(u)=u^2(1+O(u^2))$
in \eqref{eq:small-u-asymptotics} imply
\[
 v_k=\frac{d^2}{m}(1+O(1/t)),\qquad
 q_k=\frac{d^2}{m}(1+O(1/t)).
\]
In particular, $q_k=O(1/t)$. The polynomial factor in
\eqref{eq:uniform-size-ratio} can therefore be written as
\begin{align*}
 (t+2)q_k-(t+1)q_k^2
 &=tq_k\left(1+\frac2t-\left(1+\frac1t\right)q_k\right)\\
 &=\frac{td^2}{m}(1+O(1/t)).
\end{align*}
For the exponential factor, use
$E_0=(t^2/2)(1+3/t+2/t^2)$ to obtain
\[
 \frac{v_kE_0}{d}
 =\frac{t^2d}{2m}(1+O(1/t))
 =\frac{t^2d}{2m}+o(1).
\]
The last step is justified by $t^2d/(2m)=O(t/k)=O(1)$.
This boundedness turns the relative error in the exponent into an
additive $o(1)$, and hence
\[
 e^{-v_kE_0/d}
 =e^{-t^2d/(2m)}(1+o(1)).
\]
Combining these estimates with \eqref{eq:uniform-size-ratio} gives
\begin{equation}
\label{eq:growing-t-master}
 \frac{|\mathcal F_{n,k,t}|}{p(n-t,k-t)}
 =(1+o(1))\frac{td^2}{m}
 \exp\left(-\frac{t^2d}{2m}\right).
\end{equation}
To evaluate the two remaining factors separately, write
\[
 \frac{td^2}{m}
 =\frac{(d/k)^2}{m/(tk^2)}
 \longrightarrow\frac{(1-\beta)^2}{c},\qquad
 \frac{t^2d}{2m}
 =\frac{(t/k)(d/k)}{2m/(tk^2)}
 \longrightarrow\frac{\beta(1-\beta)}{2c}.
\]
This proves \eqref{eq:growing-t-limit}. If $t\to\infty$ and $t=o(k)$,
then $\beta=0$, so the exponential tends to one and the limit is $1/c$.
\end{proof}

Finally, we use Corollary~\ref{cor:quantitative-limits} to prove
Theorem~\ref{thm:every-scale}, obtaining counterexamples to Borg's
conjecture at every intersection scale.

\begin{proof}[Proof of Theorem~\ref{thm:every-scale}]
Lemma~\ref{lem:construction-size} already proves that the family is
$t$-intersecting. We must show that, with one threshold for $k$ valid
simultaneously for every $1\le t\le\lceil k/4\rceil$, its size is
strictly greater than the canonical star at $n=\lfloor tk^2/3\rfloor$.

Suppose this uniform assertion were false. We could then choose
integers $k_j\to\infty$ and $1\le t_j\le\lceil k_j/4\rceil$ such that,
with $n_j=\lfloor t_jk_j^2/3\rfloor$,
\[
 R_j:=\frac{|\mathcal F_{n_j,k_j,t_j}|}
                 {p(n_j-t_j,k_j-t_j)}\le1
 \qquad\text{for every }j.
\]
Every sequence of positive integers $(t_j)$ has either a bounded
subsequence or a subsequence tending to infinity. We examine these
two possibilities, retaining the notation $j$ after taking subsequences.

If $(t_j)$ has a bounded subsequence, one integer value $t_*$ occurs
infinitely often. Restrict to $t_j=t_*$. Part~(1) of
Corollary~\ref{cor:quantitative-limits}, with $c=1/3$, gives a limit
$R_j\to R_*>1$, since $1/3<1<c_{t_*}$. This contradicts $R_j\le1$;
indeed convergence makes $R_j>(1+R_*)/2>1$ for all sufficiently large $j$.

Otherwise, pass to a subsequence with $t_j\to\infty$. The ratios
$t_j/k_j$ are bounded, so a further subsequence converges to some
$\beta$. The ceiling in the parameter range causes no problem:
\[
 0\le\frac{t_j}{k_j}\le\frac{\lceil k_j/4\rceil}{k_j}
 \le\frac14+\frac1{k_j},
\]
and consequently $\beta\in[0,1/4]$. Part~(2) of
Corollary~\ref{cor:quantitative-limits} now gives
\[
 R_j\longrightarrow G(\beta)
 :=3(1-\beta)^2e^{-3\beta(1-\beta)/2}.
\]
We check that every possible limiting value exceeds one. The function
$G$ is positive, and
\[
 \frac{d}{d\beta}\log G(\beta)
 =-\frac2{1-\beta}-\frac32(1-2\beta)<0
 \qquad(0\le\beta\le1/4).
\]
Thus $G$ is decreasing on this interval, and its minimum occurs at
$\beta=1/4$. Using $e^{-x}\ge1-x$ for $x\ge0$, we obtain
\[
 G(\beta)\ge G(1/4)=\frac{27}{16}e^{-9/32}
 \ge\frac{27}{16}\left(1-\frac9{32}\right)
 =\frac{621}{512}>1.
\]
This again contradicts $R_j\le1$. Since both possibilities lead to
contradictions, such a sequence of failures cannot exist. Consequently,
there is a single $k_0$ for which the strict size inequality holds for
all $k\ge k_0$ and all allowed $t$.
\end{proof}

\begin{remark}
The construction $\mathcal F_{n,k,t}$ is not always optimal. For
$(n,k,t)=(31,8,1)$, it has $620$ members, whereas the following
intersecting family has $626$ members. Recall that $\mathcal S_Q(n,k)$
denotes the star of all partitions containing the multiset $Q$. Define
\[
 \cH=
 \{\lambda\in P_{31,8}:1\in\lambda,\ 
          \operatorname{supp}(\lambda)\cap\{2,3,4\}\ne\varnothing\}
 \ \cup\ \mathcal S_{\{2,3,4\}}(31,8).
\]
Two members of the first branch share $1$;
two members of the second share $2,3,4$; and a member from each branch
shares at least one value in $\{2,3,4\}$. Inclusion--exclusion, with the overlap of the branches cancelled, gives
\begin{align*}
 |\mathcal H|
 &=p(28,6)+p(27,6)+p(26,6)\\
 &\quad-p(25,5)-p(24,5)-p(23,5)+p(22,5)\\
 &=391+331+282-192-164-141+119=626,
\end{align*}
which exceeds $|\mathcal F_{31,8,1}|=620$.
\end{remark}
In the next section we use the construction above to provide counterexamples to conjectures of Borg on  intersecting unrestricted integer partitions.

\section{Counterexamples for  \texorpdfstring{$t$}{t}-intersecting unrestricted integer partitions}\label{sec:counterexamples_unrestricted}
\subsection{Small counterexamples}
First we present a small counterexample to Borg's conjecture~1.1~\cite{Borg2016}. 

For $n\ge1$ and $\mathcal{H}=\bigl\{\{1,2\},\{1,3\},\{1,4\}, \{2,3,4\}\bigr\}$, define
\begin{equation}\label{eq:unrestrited_family}
 \An=\{\lambda\in P_n:
             H\subseteq\supp(\lambda)\text{ for some }H\in\mathcal H\}.
\end{equation}
Set $p(0)=1$ and $p(m)=0$ for $m<0$. 
\begin{proposition}\label{prop:finite}
The family $\mathcal A_n$ is intersecting, and
\begin{equation}\label{eq:count}
 |\cA_n|=p(n-3)+p(n-4)+p(n-5)
                 -p(n-6)-p(n-7)-p(n-8)+p(n-9).
\end{equation}
In particular, $|\cA_{57}|=527254>526823=|\cT_{57,1}|=p(56)$.
\end{proposition}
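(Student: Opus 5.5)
The proof has three parts: the intersecting property, the counting formula \eqref{eq:count}, and the numerical comparison at $n=57$.

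\emph{Intersecting property.} It suffices to check that any two members of $\mathcal H=\{\{1,2\},\{1,3\},\{1,4\},\{2,3,4\}\}$ share an element. Two of the three sets containing $1$ meet in $1$. Each of them meets $\{2,3,4\}$ in its second element. Hence if $H\subseteq\supp(\lambda)$ and $H'\subseteq\supp(\mu)$ with $H,H'\in\mathcal H$, then $\lambda$ and $\mu$ have a common part value, so $I(\lambda,\mu)\ge1$.

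\emph{Counting.} Write $B_H=\{\lambda\in P_n:H\subseteq\supp(\lambda)\}$. For a set $T$ of distinct values, deleting one copy of each value of $T$ is a bijection between $\{\lambda\in P_n:T\subseteq\supp(\lambda)\}$ and $P_{n-\sigma(T)}$, where $\sigma(T)=\sum_{a\in T}a$. The conventions $p(0)=1$ and $p(m)=0$ for $m<0$ cover the degenerate cases, so $|\bigcap_{H\in\mathcal G}B_H|=p\bigl(n-\sigma(\bigcup\mathcal G)\bigr)$ for every nonempty $\mathcal G\subseteq\mathcal H$.

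I would then apply inclusion--exclusion over the $15$ nonempty subfamilies $\mathcal G$, which is cleanest via the Möbius function of the union lattice. The unions that occur are
\[
\{1,2\},\ \{1,3\},\ \{1,4\},\ \{2,3,4\},\ \{1,2,3\},\ \{1,2,4\},\ \{1,3,4\},\ \{1,2,3,4\},
\]
with weights $3,4,5,9,6,7,8,10$. For each union $W$, sum $(-1)^{|\mathcal G|+1}$ over all $\mathcal G$ with $\bigcup\mathcal G=W$:
\begin{itemize}
\item The singletons give $+1$ each.
\item $\{1,2,3\}$ arises from $\{\{1,2\},\{1,3\}\}$ only, so coefficient $-1$; likewise $\{1,2,4\}$ and $\{1,3,4\}$ get $-1$.
\item $\{1,2,3,4\}$ arises from all remaining subfamilies. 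There are $15-7=8$ of them: the triple $\{\{1,2\},\{1,3\},\{1,4\}\}$, the three pairs containing $\{2,3,4\}$, the three triples containing $\{2,3,4\}$, and the full family. Their signed count is $+1-3+3-1=0$.
\end{itemize}
This gives
\[
|\cA_n|=p(n-3)+p(n-4)+p(n-5)+p(n-9)-p(n-6)-p(n-7)-p(n-8),
\]
which is \eqref{eq:count}.

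\emph{Numerics.} I would insert the standard values $p(54),p(53),p(52),p(51),p(50),p(49),p(48)$ and $p(56)$ from tables and verify $527254>526823$. The only real hazard in the whole proof is bookkeeping: getting the zero coefficient of $\{1,2,3,4\}$ right, and doing the arithmetic with the large partition numbers correctly.
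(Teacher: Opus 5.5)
Your proposal is correct and follows the paper's route: the intersecting property comes from $\mathcal H$ being pairwise intersecting, and inclusion--exclusion gives \eqref{eq:count}. You actually carry out the bookkeeping that the paper leaves implicit, including the vanishing coefficient of $\{1,2,3,4\}$, and the comparison at $n=57$ is the same direct numerical check. To finish that check, record $p(54)+p(53)+p(52)-p(51)-p(50)-p(49)+p(48)=386155+329931+281589-239943-204226-173525+147273=527254$ against $p(56)=526823$, rather than just citing tables.
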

\begin{proof}
The four members of $\mathcal H$ are pairwise intersecting, so any two
members of $\mathcal{A}_n$ share a part and, therefore, $\cA_n$ is $1$-intersecting. 

By the inclusion-exclusion formula we thus obtain
\[
 |\cA_n|=p(n-3)+p(n-4)+p(n-5)-p(n-6)-p(n-7)-p(n-8)+p(n-9).
\]

At $n=57$, the required integer partition numbers give
\[
\begin{aligned}
 |\cA_{57}|
 &=386155+329931+281589\\
 &\quad-239943-204226-173525+147273\\
 &=527254>526823=p(56).
\end{aligned}
\]
These integers can be checked by the recurrence
$p_j(m)=p_{j-1}(m)+p_j(m-j)$, where $p_j(m)$ counts partitions into
parts at most $j$, with $p_0(0)=1$, $p_0(m)=0$ for $m>0$, and
$p_j(m)=0$ for $m<0$; then $p(m)=p_m(m)$ for $m\ge1$.
\end{proof}

For a partition $\lambda$ of some positive integer $d$ we write $\lambda\uplus1^r$ to mean the partition of $d+r$ obtained by adding
$r$ ones to $\lambda$; this is a multiset operation.

\begin{corollary}\label{cor:finite-t}
For every integer $t\ge1$, the family
\[
 \{\lambda\uplus1^{t-1}:\lambda\in\mathcal A_{57}\}
 \subseteq P_{t+56}
\]
is $t$-intersecting and has $527254$ members, exceeding the size
$p(56)=526823$ of the canonical $t$-star $\cT_{t+56,t}$.
\end{corollary}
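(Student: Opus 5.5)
The plan is to pad every member of $\mathcal A_{57}$ with $t-1$ extra ones and check three things: the resulting family lies in $P_{t+56}$, it has the same size as $\mathcal A_{57}$, and it is $t$-intersecting. Proposition~\ref{prop:finite} then supplies the size comparison.

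For $\lambda\in P_{57}$, the multiset $\lambda\uplus1^{t-1}$ has weight $57+t-1=t+56$, so it is a partition of $t+56$. The map $\lambda\mapsto\lambda\uplus1^{t-1}$ is injective, because $\lambda$ is recovered by removing $t-1$ copies of $1$ (multiplicities simply shift). Hence the image has exactly $|\mathcal A_{57}|=527254$ members.

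For the intersection property, take $\lambda,\mu\in\mathcal A_{57}$. By Proposition~\ref{prop:finite} they share some part value $a$, so $I(\lambda,\mu)\ge1$. Adding $t-1$ ones raises both $m_\lambda(1)$ and $m_\mu(1)$ by $t-1$. Therefore $\min\{m(1)\}$ increases by exactly $t-1$, and every other minimum in the sum defining $I$ is unchanged. This gives
\[
 I(\lambda\uplus1^{t-1},\mu\uplus1^{t-1})=I(\lambda,\mu)+t-1\ge t.
\]
This holds whether or not the shared value $a$ equals $1$, so the padded family is $t$-intersecting.

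Finally, the canonical star has size $|\mathcal T_{t+56,t}|=p(t+56-t)=p(56)=526823$ by the formula in the introduction. Proposition~\ref{prop:finite} gives $527254>526823$. The only step needing care is the intersection identity, and it follows directly from the definition of $I$, so I expect no real obstacle.
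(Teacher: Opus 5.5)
Your proposal is correct and matches the paper's proof: it uses the same identity $I(\lambda\uplus1^{t-1},\mu\uplus1^{t-1})=(t-1)+I(\lambda,\mu)\ge t$, the injectivity of the padding map, and $|\mathcal T_{t+56,t}|=p(56)$ together with the counts from Proposition~\ref{prop:finite}. You also check that the padded partitions have weight $t+56$, which the paper leaves implicit.
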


\begin{proof}
Adding the same $t-1$ ones to two partitions increases their number of
common parts by exactly $t-1$:
\begin{equation}\label{eq:lift-intersection}
 I(\lambda\uplus1^{t-1},\mu\uplus1^{t-1})
 =(t-1)+I(\lambda,\mu)\ge t.
\end{equation}
The operation is injective, and $|\cT_{t+56,t}|=p(56)$.
\end{proof}

\subsection{Useful estimates of the partition function}
Next we will analyze the family $\cA_n$ and show that, for large $n$, it has more elements that the canonical star $\cT_{n,1}$ and, by a similar lifting as in the Corollary~\ref{cor:finite-t}, also provide counterexamples to Conjecture~1.4 of Borg~\cite{Borg2016} about largest $t$-intersecting families of unrestricted integer partitions.

For this, we will use estimates on the error bound in the asymptotic expansion of the partition function as given by Nemes~\cite{Nemes2024}. 

Set
\[
 \alpha=\frac{\pi}{\sqrt6},\qquad
 H(n)=\frac{e^{2\alpha\sqrt n}}{4\sqrt3\,n},\qquad
 E(n)=e^{-\alpha\sqrt n}\quad(n\ge1).
\]
Then define, for each integer $r\ge0$,
\begin{equation}\label{eq:coefficients}
 c_r=\frac{(-1)^r}{(4\sqrt6)^r}
 \sum_{k=0}^{\lfloor(r+1)/2\rfloor}
 \binom{r+1}{k}\frac{r+1-k}{(r+1-2k)!}
 \left(\frac{\pi}{6}\right)^{r-2k}.
\end{equation}
In particular,
\[
 c_0=1,\qquad
 c_1=-\frac{\pi^2+72}{24\pi\sqrt6},\qquad
 c_2=\frac1{16}+\frac{\pi^2}{6912}.
\]
Every summand in~\eqref{eq:coefficients} is positive before the factor
$(-1)^r$ is applied, so $(-1)^r c_r>0$.
For integers $N\ge0$ and $n\ge1$, the \emph{normalized remainder}
$R_N(n)$ is defined by
\begin{equation}\label{eq:remainder-definition}
 p(n)=H(n)\left(\sum_{r=0}^{N-1}\frac{c_r}{n^{r/2}}+R_N(n)\right),
\end{equation}
with the sum interpreted as empty when $N=0$.

We will be using the following explicit estimates given by Nemes~\cite[Theorem~1.1]{Nemes2024}.

\begin{theorem}\label{thm:nemes}
For all integers $k\ge0$ and $n\ge1$,
\begin{equation}\label{eq:even-remainder}
 -E(n)<R_{2k}(n)<\frac{c_{2k}}{n^k}+E(n),
\end{equation}
and
\begin{equation}\label{eq:odd-remainder}
 \frac{c_{2k+1}}{n^{k+1/2}}-E(n)<R_{2k+1}(n)<E(n).
\end{equation}
\end{theorem}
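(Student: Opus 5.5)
The statement of Theorem~\ref{thm:nemes} is quoted from Nemes~\cite[Theorem~1.1]{Nemes2024}, and in the paper we would simply cite it. If we had to reprove it, the plan would start from Rademacher's convergent series
\[
 p(n)=\frac1{\pi\sqrt2}\sum_{j\ge1}A_j(n)\sqrt j\,\frac{d}{dn}\left(\frac{\sinh\bigl(2\alpha\sqrt{n-1/24}/j\bigr)}{\sqrt{n-1/24}}\right),
\]
where the $A_j(n)$ are Kloosterman-type sums with $A_1=1$ and $|A_j(n)|\le j$. We would split $p(n)$ into three pieces:
\begin{enumerate}
\item the dominant piece, namely the $e^{+}$ half of the $j=1$ term;
\item the $e^{-}$ half of the $j=1$ term;
\item the tail $j\ge2$.
\end{enumerate}
The aim is to show that pieces (2) and (3) together are bounded in absolute value by $H(n)E(n)$. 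The whole asymptotic expansion, with its signed remainders, would then come from piece (1).

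\emph{Step 1 (the dominant term).} The $e^{+}$ half of the $j=1$ term is an elementary function of $\nu=\sqrt{n-1/24}$, essentially $e^{2\alpha\nu}\nu^{-2}(1-1/(2\alpha\nu))$ up to constants. We would write it as $H(n)\Psi(n^{-1/2})$. Expanding $\sqrt{n-1/24}=\sqrt n\,(1-1/(24n))^{1/2}$, together with the exponential and the powers, reproduces the coefficients $c_r$ in \eqref{eq:coefficients}. The binomial and factorial structure there is exactly what arises from composing $\exp(2\alpha\sqrt n((1-x)^{1/2}-1))$ with the polynomial prefactor.

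\emph{Step 2 (enveloping remainders).} The key point is that truncating this expansion after $N$ terms leaves a remainder of the sign of the first omitted term, and of size at most that term. That is what \eqref{eq:even-remainder} and \eqref{eq:odd-remainder} say, modulo the $\pm E(n)$ slack. We would attempt to write $\Psi$ as a Laplace or Stieltjes integral with a positive density in the variable $n^{-1/2}$, for instance by representing $e^{-a/(\sqrt n+\sqrt{n-1/24})}$-type factors via Bessel-function integrals. From such a representation, the alternation $(-1)^rc_r>0$ and the standard enveloping theorem for completely monotone remainders would give the two-sided bounds. The even/odd asymmetry in the statement suggests grouping terms in pairs. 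In that grouping, the remainder after an even number of terms is nonnegative, up to $E(n)$, and bounded by the next even-indexed term. The odd case follows symmetrically.

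\emph{Step 3 (exponentially small pieces).} The $e^{-}$ half of the $j=1$ term is $O(e^{-2\alpha\sqrt n})$ relative to $H(n)$. For the tail, $|A_j(n)|\le j$ together with $\sinh(x)\le x\cosh x$ bounds $\sum_{j\ge2}$ by roughly $e^{\alpha\sqrt n}/n$ times a constant. Relative to $H(n)$ this is of order $E(n)$. The constants have to be checked so that the total is strictly less than $H(n)E(n)$ for every $n\ge1$. For small $n$ this may require a direct numerical verification.

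The main obstacle is Step 2: establishing the exact sign-definite enveloping property with the sharp constants $c_{2k}$ and $c_{2k+1}$, uniformly for all $n\ge1$ rather than only asymptotically. I would first look for a positive integral representation of the remainder of the dominant term. Failing that, I would fall back to Nemes's approach of a resurgence-type formula for the Bessel function $I_{3/2}$.
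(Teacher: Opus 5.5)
Citing Nemes~\cite[Theorem~1.1]{Nemes2024}, as your first sentence proposes, is exactly what the paper does: it states the theorem without proof. Your optional reproof, however, would not go through as planned.

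In Step~2, no positive Laplace or Stieltjes representation in $x=n^{-1/2}$ can exist. Your dominant piece divided by $H(n)$ is
\[
 \Psi(x)=\frac{1}{1-x^2/24}\exp\Bigl(\frac{2\alpha\bigl(\sqrt{1-x^2/24}-1\bigr)}{x}\Bigr)\Bigl(1-\frac{x}{2\alpha\sqrt{1-x^2/24}}\Bigr).
\]
This function is analytic for $|x|<\sqrt{24}$, but it tends to $-\infty$ as $x\to\sqrt{24}^-$ and is already negative for $2.28<x<\sqrt{24}$. A transform of a positive measure that converges on $(0,1]$ would be positive and analytic on all of $(0,\infty)$. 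By the identity theorem it would then agree with $\Psi$ on $(0,\sqrt{24})$, which is impossible.

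The usable fact is the opposite one: for the dominant piece, $\sum_r c_r n^{-r/2}$ is a \emph{convergent} power series for $n>1/24$. It alternates because $(-1)^rc_r>0$. Moreover $|c_r|$ is nonincreasing ($1,\,0.443,\,0.064,\,0.028,\,0.0034,\,0.0014,\dots$), which you can check from \eqref{eq:coefficients}. So for $x\le1$, Leibniz's alternating-series estimate gives, for every $N$, a remainder with the sign of $c_Nx^N$ and modulus at most $|c_N|x^N$. No integral representation is needed.

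In Step~3 the constant is the whole point, because the slack is exactly $E(n)$. Since the main series converges, letting $N\to\infty$ in the theorem shows that the exponentially small pieces must genuinely lie in $[-E(n),E(n)]$. The $j=2$ Rademacher term is not negligible: relative to $H(n)$ it equals $(1+o(1))A_2(n)E(n)/\sqrt2$. Your bound $|A_j(n)|\le j$ turns this into roughly $\sqrt2\,E(n)$ from $j=2$ alone, which already exceeds the allowance. You need the exact value $|A_2(n)|=1$; more generally, $|A_j(n)|\le\varphi(j)$. Also, bounding $y\cosh y-\sinh y$ by $y\cosh y$ makes the sum over $j$ diverge. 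For large $j$ you must keep the cancellation $y\cosh y-\sinh y\le\tfrac13y^3\cosh y$. Only with these refinements, plus a direct numerical check for small $n$, does the $\pm E(n)$ slack come out.
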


The estimates are asymmetric. In particular, taking $k=1$
in~\eqref{eq:odd-remainder}, and using $c_3<0$, gives
\begin{equation}\label{eq:R3}
 \frac{c_3}{n^{3/2}}-E(n)<R_3(n)<E(n),
 \qquad
 |R_3(n)|\le\frac{|c_3|}{n^{3/2}}+E(n)=O(n^{-3/2}).
\end{equation}

The following lemma is a straightforward consequence of the above theorem and will allow for comparison of the partition functions between values which differ in a constant. 

\begin{lemma}\label{lem:ratio}
For each fixed integer $j$, as $n\to\infty$,
\begin{equation}\label{eq:ratio}
 \frac{p(n-j)}{p(n)}
 =1-\frac{\alpha j}{\sqrt n}
  +\frac{j+\alpha^2j^2/2}{n}+O(n^{-3/2}).
\end{equation}
The error constant can be chosen uniformly when $j$ ranges over any
fixed finite set of integers.
\end{lemma}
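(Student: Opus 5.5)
We plan to apply Theorem~\ref{thm:nemes} with $N=3$ at both $n$ and $n-j$, and then expand the quotient. Write
\[
 p(n)=H(n)\Bigl(1+\frac{c_1}{\sqrt n}+\frac{c_2}{n}+R_3(n)\Bigr),
 \qquad |R_3(n)|=O(n^{-3/2})
\]
by \eqref{eq:R3}. The same holds at $n-j$, and since $(n-j)^{-1/2}=n^{-1/2}+O(n^{-3/2})$ and $(n-j)^{-1}=n^{-1}+O(n^{-2})$ for fixed $j$, the bracket at $n-j$ equals $1+c_1/\sqrt n+c_2/n+O(n^{-3/2})$ as well. We will need one correction: $(n-j)^{-1/2}=n^{-1/2}(1+j/(2n)+\dots)$ only changes the $c_1$ term at order $n^{-3/2}$. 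So the two brackets agree up to $O(n^{-3/2})$, and their quotient is $1+O(n^{-3/2})$. Hence
\[
 \frac{p(n-j)}{p(n)}=\frac{H(n-j)}{H(n)}\bigl(1+O(n^{-3/2})\bigr).
\]

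The main computation is the ratio
\[
 \frac{H(n-j)}{H(n)}=\frac{n}{n-j}\exp\bigl(2\alpha(\sqrt{n-j}-\sqrt n)\bigr).
\]
We expand $\sqrt{n-j}-\sqrt n=-\frac{j}{2\sqrt n}-\frac{j^2}{8n^{3/2}}+O(n^{-5/2})$, so the exponent is $-\alpha j/\sqrt n-\alpha j^2/(4n^{3/2})+O(n^{-5/2})$. Its exponential is
\[
 1-\frac{\alpha j}{\sqrt n}+\frac{\alpha^2j^2}{2n}+O(n^{-3/2}).
\]
Multiplying by $n/(n-j)=1+j/n+O(n^{-2})$ gives
\[
 1-\frac{\alpha j}{\sqrt n}+\frac{j+\alpha^2j^2/2}{n}+O(n^{-3/2}),
\]
since the cross term $-\alpha j^2/n^{3/2}$ is absorbed into the error. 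Multiplying by $1+O(n^{-3/2})$ then yields \eqref{eq:ratio}.

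For uniformity over a finite set of $j$, note that every $O$-constant above depends only on $|j|$, $\alpha$, $c_1$, $c_2$, $|c_3|$, and the exponentially small $E(n-j)$. So taking the maximum over finitely many $j$ suffices, with $n$ large enough that $n-j\ge n/2\ge1$. The only real care needed, and the main obstacle in practice, is bookkeeping the Taylor remainders of $\sqrt{n-j}$ and of the exponential so that nothing of order $n^{-1}$ is lost. The $c_1,c_2$ terms cancel in the bracket quotient only to order $n^{-3/2}$, which is exactly the precision claimed, so no higher Nemes term is required. Negative $j$ are handled identically.
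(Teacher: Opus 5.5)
Your proposal is correct and follows essentially the same route as the paper's proof. Both write $p=H\cdot S$ using Nemes's $N=3$ expansion, show that the bracket quotient is $1+O(n^{-3/2})$ (using that the bracket at $n$ tends to $1$), and then Taylor-expand $H(n-j)/H(n)$. The only difference is that you exponentiate first and then multiply by $n/(n-j)$, whereas the paper expands the full logarithm before exponentiating, which is immaterial.
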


\begin{proof}
Write $S(n)=p(n)/H(n)$. By~\eqref{eq:remainder-definition}
and~\eqref{eq:R3},
\[
 S(n)=1+\frac{c_1}{\sqrt n}+\frac{c_2}{n}+R_3(n),
 \qquad R_3(n)=O(n^{-3/2}).
\]
For fixed $j$, subtracting the expressions at $n-j$ and $n$ yields
\[
\begin{split}
 S(n-j)-S(n)={}&c_1\big((n-j)^{-1/2}-n^{-1/2}\big)\\
 &+c_2\big((n-j)^{-1}-n^{-1}\big)
   +R_3(n-j)-R_3(n)=O(n^{-3/2}).
\end{split}
\]
The first  two explicit differences are $O(n^{-3/2})$ and $O(n^{-2})$,
respectively. The remainder difference is bounded by the sum of its
two absolute values, hence is $O(n^{-3/2})$ as well. Since $S(n)\to1$, division gives
\begin{equation}\label{eq:S-ratio}
 \frac{S(n-j)}{S(n)}=1+O(n^{-3/2}).
\end{equation}
On the other hand,
\[
 \frac{H(n-j)}{H(n)}
 =\frac{n}{n-j}
  \exp\bigl(2\alpha(\sqrt{n-j}-\sqrt n)\bigr),
\]
which we rewrite as 
\[
 \ln\left(\frac{H(n-j)}{H(n)}\right)
 =-\ln(1-j/n)+
  2\alpha(\sqrt{n-j}-\sqrt n).
\]

For fixed $j$,  using Taylor expansion gives
\[
-\ln(1-j/n)+
  2\alpha(\sqrt{n-j}-\sqrt n)=-\frac{\alpha j}{\sqrt{n}}+\frac{j}{n}+O(n^{-3/2}).
\]
Exponentiating we obtain
\[
\begin{aligned}
\frac{H(n-j)}{H(n)}
&=\exp\left(-\frac{\alpha j}{\sqrt n}
             +\frac jn+O(n^{-3/2})\right)\\
&=1-\frac{\alpha j}{\sqrt n}
  +\frac{j+\alpha^2j^2/2}{n}+O(n^{-3/2}).
\end{aligned}
\]
We thus obtain
\[
\begin{split}
\frac{p(n-j)}{p(n)}&=\frac{S(n-j)}{S(n)}\frac{H(n-j)}{H(n)}
=\\
&\left(1+O(n^{-3/2})\right)\left(1-\frac{\alpha j}{\sqrt{n}}+\frac{j+\alpha^2j^2/2}{n}+O(n^{-3/2})\right),
\end{split}
\]
which is the claimed $1-\frac{\alpha j}{\sqrt{n}}+\frac{j+\alpha^2j^2/2}{n}+O(n^{-3/2})$.
\end{proof}

\subsection{Large counterexamples}
Lemma~\ref{lem:ratio} yields the following family of counterexamples for large $1$-intersecting unrestricted integer partitions. 
\begin{proposition}\label{prop:asymptotic}
The family $\cA_n$ satisfies
\begin{equation}\label{eq:A-asymptotic}
 \frac{|\cA_n|}{p(n)}
 =1-\frac{9\alpha^2}{n}+O(n^{-3/2}).
\end{equation}
Moreover,
\begin{equation}\label{eq:gain-asymptotic}
 \frac{|\cA_n|-p(n-1)}{p(n)}
 =\frac{\alpha}{\sqrt n}
  -\frac{1+19\alpha^2/2}{n}+O(n^{-3/2}).
\end{equation}
In particular, $|\cA_n|>|\cT_{n,1}|=p(n-1)$ for all sufficiently large $n$.
\end{proposition}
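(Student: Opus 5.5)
The plan is to start from the exact count \eqref{eq:count} of Proposition~\ref{prop:finite} and divide by $p(n)$. This writes
\[
 \frac{|\cA_n|}{p(n)}=\sum_{j\in J}\varepsilon_j\frac{p(n-j)}{p(n)},
 \qquad J=\{3,\dots,9\},
\]
with signs $\varepsilon_3=\varepsilon_4=\varepsilon_5=+1$, $\varepsilon_6=\varepsilon_7=\varepsilon_8=-1$ and $\varepsilon_9=+1$. Since $J$ is a fixed finite set, Lemma~\ref{lem:ratio} applies to every term with one uniform error constant. Substituting \eqref{eq:ratio} then gives
\[
 \frac{|\cA_n|}{p(n)}=\sum_j\varepsilon_j-\frac{\alpha}{\sqrt n}\sum_j\varepsilon_j j+\frac1n\sum_j\varepsilon_j\Bigl(j+\frac{\alpha^2j^2}{2}\Bigr)+O(n^{-3/2}).
\]
So everything reduces to evaluating three moment sums.

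\emph{The three moment sums.}
\begin{itemize}
 \item $\sum\varepsilon_j=3-3+1=1$.
 \item $\sum\varepsilon_j j=(3+4+5)-(6+7+8)+9=12-21+9=0$. This kills the $n^{-1/2}$ term.
 \item $\sum\varepsilon_j j^2=(9+16+25)-(36+49+64)+81=50-149+81=-18$, so the quadratic part contributes $\alpha^2(-18)/2=-9\alpha^2$.
 \item The linear part of the $1/n$ coefficient is $\sum\varepsilon_j j=0$ again.
\end{itemize}
Hence the $1/n$ coefficient is $-9\alpha^2$, which gives \eqref{eq:A-asymptotic}.

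\emph{The gain over the star.} For \eqref{eq:gain-asymptotic}, apply Lemma~\ref{lem:ratio} with $j=1$:
\[
 \frac{p(n-1)}{p(n)}=1-\frac{\alpha}{\sqrt n}+\frac{1+\alpha^2/2}{n}+O(n^{-3/2}).
\]
Subtracting this from \eqref{eq:A-asymptotic}, the constants cancel. What remains is the term $\alpha/\sqrt n$, and a $1/n$ coefficient of $-9\alpha^2-1-\alpha^2/2=-(1+19\alpha^2/2)$. For the final claim, multiply by $p(n)>0$: the leading term $\alpha/\sqrt n$ dominates the $O(1/n)$ terms. Thus the gain is positive for all sufficiently large $n$, so $|\cA_n|>p(n-1)=|\cT_{n,1}|$.

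There is no real obstacle, since the asymptotic work is already packaged in Lemma~\ref{lem:ratio}. The only points needing care are checking the signs in the inclusion--exclusion count, and noting that the error constants are uniform because only the eight indices $j\in\{1,3,\dots,9\}$ occur.
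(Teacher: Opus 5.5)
Your proposal is correct and matches the paper's proof essentially step for step. Like the paper, you divide the count \eqref{eq:count} by $p(n)$, apply Lemma~\ref{lem:ratio} uniformly over the fixed shifts, and evaluate the moment sums $1$, $0$, $-18$; you then subtract the $j=1$ expansion to obtain \eqref{eq:gain-asymptotic} and the positivity of the gain.
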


\begin{proof}
By~\eqref{eq:count} from Proposition~\ref{prop:finite}, we have
 \[
 |\cA_n|=p(n-3)+p(n-4)+p(n-5)-p(n-6)-p(n-7)-p(n-8)+p(n-9).
 \]
Let $a_3=a_4=a_5=a_9=1$ and $a_6=a_7=a_8=-1$. 
The three sums controlling the constant, $n^{-1/2}$, and $n^{-1}$ terms in $\frac{|\cA_n|}{p(n)}$ via~\eqref{eq:ratio} are
\[
\begin{aligned}
 \sum_j a_j&=4-3=1,\\
 \sum_j ja_j&=3+4+5+9-6-7-8=0,\\
 \sum_j j^2a_j&=3^2+4^2+5^2+9^2-6^2-7^2-8^2=-18.
\end{aligned}
\]
Applying Lemma~\ref{lem:ratio} to this fixed set of shifts yields
\[
 \frac{|\mathcal A_n|}{p(n)}
 =1-\frac{9\alpha^2}{n}+O(n^{-3/2}).
\]
Since $9\alpha^2=3\pi^2/2$, this is~\eqref{eq:A-asymptotic}.
Applying the same lemma with $j=1$ gives
\[
 \frac{p(n-1)}{p(n)}
 =1-\frac{\alpha}{\sqrt n}
   +\frac{1+\alpha^2/2}{n}+O(n^{-3/2}).
\]
Subtracting proves~\eqref{eq:gain-asymptotic}. Its leading term is
positive and dominates the $O(n^{-1})$ correction. Thus the gain is
positive for all sufficiently large $n$; in particular,
\[
 |\mathcal A_n|-p(n-1)\sim\frac{\pi}{\sqrt{6n}}\,p(n).
\]
\end{proof}

Finally, we lift our construction again to make families $t$ intersecting as follows. For integers $1\le t\le n$, put $d=n-t$ and define
\begin{equation}\label{eq:B-definition}
 \cA_{n,t}
 =\{\lambda\uplus1^{t-1}:\lambda\in\mathcal A_{d+1}\}
 \subseteq P_n.
\end{equation}
Equation~\eqref{eq:lift-intersection} shows that $\cA_{n,t}$ is
$t$-intersecting and $|\cA_{n,t}|=|\cA_{d+1}|$. Consequently,
\begin{equation}\label{eq:B-count}
\begin{split}
 |\cA_{n,t}|=|\cA_{d+1}|
 ={}&p(d-2)+p(d-3)+p(d-4)\\
    &-p(d-5)-p(d-6)-p(d-7)+p(d-8).
\end{split}
\end{equation}
Thus its size, and its comparison with the canonical star of size
$|\cT_{n,t}|=p(d)$, depend exactly on the gap $d=n-t$.

\begin{theorem}\label{thm:uniform-t}
As $d=n-t\to\infty$,
\begin{equation}\label{eq:B-asymptotic}
 \frac{|\cA_{n,t}|}{p(n-t)}
 =1+\frac{\alpha}{\sqrt d}
     -\frac{1+17\alpha^2/2}{d}+O(d^{-3/2}),
 \qquad\text{where  } \alpha=\frac{\pi}{\sqrt6},
\end{equation}
with an absolute error constant independent of $n$ and $t$.
In particular, there is an absolute integer $D_0$ such that for every
pair of integers $n,t$ with $t\ge1$ and $n-t\ge D_0$,
\[
 |\cA_{n,t}|>|\cT_{n,t}|=p(n-t).
\]
\end{theorem}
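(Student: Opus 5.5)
The plan is to reduce everything to Proposition~\ref{prop:asymptotic}, applied with $n$ replaced by $d+1$. By \eqref{eq:B-count} we have $|\cA_{n,t}|=|\cA_{d+1}|$. Writing $N=d+1$, Proposition~\ref{prop:asymptotic} gives
\[
 \frac{|\cA_{N}|}{p(N)}=1-\frac{9\alpha^2}{N}+O(N^{-3/2}).
\]
Its error depends only on $N$: it comes from Lemma~\ref{lem:ratio} applied to the fixed finite shift set $\{3,\dots,9\}$, with a uniform constant. Hence the resulting estimate is automatically independent of $n$ and $t$ separately, which gives the claimed absolute error constant. The only remaining task is to renormalize by $p(d)=p(N-1)$ instead of $p(N)$.

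For this, write
\[
 \frac{|\cA_{n,t}|}{p(d)}=\frac{|\cA_{N}|}{p(N)}\cdot\frac{p(N)}{p(N-1)}.
\]
Lemma~\ref{lem:ratio} with $j=1$ gives
\[
 \frac{p(N-1)}{p(N)}=1-\frac{\alpha}{\sqrt N}+\frac{1+\alpha^2/2}{N}+O(N^{-3/2}).
\]
Set $x=\alpha/\sqrt N$ and $y=(1+\alpha^2/2)/N$. Inverting via $(1-x+y)^{-1}=1+x-y+x^2+O(N^{-3/2})$ yields
\[
 \frac{p(N)}{p(N-1)}=1+\frac{\alpha}{\sqrt N}+\frac{\alpha^2/2-1}{N}+O(N^{-3/2}).
\]
Multiplying by $1-9\alpha^2/N$ then gives
\[
 \frac{|\cA_{n,t}|}{p(d)}=1+\frac{\alpha}{\sqrt N}-\frac{1+17\alpha^2/2}{N}+O(N^{-3/2}).
\]

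Finally, I convert from $N=d+1$ back to $d$. Since $N^{-1/2}=d^{-1/2}+O(d^{-3/2})$ and $N^{-1}=d^{-1}+O(d^{-2})$, the expansion in $d$ has the same coefficients, which is \eqref{eq:B-asymptotic}.

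The positivity claim then follows directly. The term $\alpha/\sqrt d$ dominates both $(1+17\alpha^2/2)/d$ and the absolute $O(d^{-3/2})$ error once $d\ge D_0$. Because the error constant is absolute, $D_0$ is absolute. In addition, $|\cT_{n,t}|=p(n-t)$ holds by the definition of the canonical star.

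The main point to check is the bookkeeping in the second-order inversion, where the $x^2=\alpha^2/N$ term must be included. I would verify the coefficient by computing directly with Lemma~\ref{lem:ratio}. Using shifts $j-1\in\{2,\dots,8\}$ relative to $p(d)$ with the same signs $a_j$, the sums are $\sum a_j=1$, $\sum (j-1)a_j=0-1=-1$, and $\sum (j-1)^2a_j=-18+2+1=-15$. This gives
\[
 1+\frac{\alpha}{\sqrt d}+\frac{-1-15\alpha^2/2}{d}.
\]
This does not match the stated $-(1+17\alpha^2/2)/d$, so one of the two computations has a slip. I would recheck both carefully before fixing the $1/d$ coefficient. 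The leading positive $\alpha/\sqrt d$ term, which is all that is needed for the inequality, is unaffected either way.
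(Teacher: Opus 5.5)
Your main derivation is correct and is essentially the paper's argument. The only difference is cosmetic: the paper applies Lemma~\ref{lem:ratio} with $j=-1$ directly at $d$ to get $p(d+1)/p(d)$, whereas you apply it with $j=1$ at $N=d+1$, invert, and shift back to $d$.

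The discrepancy you flag comes from an arithmetic slip in your cross-check, not in the main computation. Expanding $(j-1)^2=j^2-2j+1$ gives $\sum_j (j-1)^2a_j=\sum_j j^2a_j-2\sum_j ja_j+\sum_j a_j=-18-0+1=-17$, not $-15$; directly, $4+9+16+64-25-36-49=-17$. With this value the direct route gives the $1/d$ coefficient $\sum_j(j-1)a_j+\tfrac{\alpha^2}{2}\sum_j(j-1)^2a_j=-1-\tfrac{17\alpha^2}{2}$. This confirms the stated coefficient, so you should drop the hedge and commit to it.
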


\begin{proof}
Use Lemma~\ref{lem:ratio} with $j=-1$ and $n=d$ to obtain
\[
 \frac{p(d+1)}{p(d)}
 =1+\frac{\alpha}{\sqrt d}
    +\frac{-1+\alpha^2/2}{d}+O(d^{-3/2}).
\]
By Proposition~\ref{prop:asymptotic},
\[
 \frac{|\cA_{d+1}|}{p(d+1)}
 =1-\frac{9\alpha^2}{d}+O(d^{-3/2}).
\]
Multiplying these two expressions proves~\eqref{eq:B-asymptotic}.
The only asymptotic variable is $d$; no error term depends on $t$.
Finally (recall that $d=n-t$),
\[
 \frac{|\cA_{n,t}|-p(d)}{p(d)}
 =\frac{\alpha}{\sqrt d}+O(d^{-1})>0
\]
for every sufficiently large $d$, with an absolute threshold.
\end{proof}

The theorem allows $t$ to grow with $n$. In particular, for any
sequence of pairs $(n,t)$ with $n-t\to\infty$, these families eventually
give counterexamples, even when $t/n\to1$.

\begin{remark}[The explicit cutoff]
GPT-6.1 Sol claims that one can take $d=n-t\ge56$ and $n\ge 57$, but it amounts to some elementary recurrence checks and doesn't appear worth writing out in full. 
\end{remark}

\section{Concluding remarks}\label{sec:conclude}
Our counterexamples show that any star bound valid uniformly across all
intersection scales must allow a threshold of order at least $tk^2$.
Theorem~\ref{thm:main-upper} establishes such a bound under the stronger
assumption $n\ge Ak^3$. In the first version of ArXiV-submission we conjectured that a sufficiently large constant multiple of $tk^2$ already would suffice and claimed a proof generated by GPT-6 Astra to require $n\ge Ctk^2$ for all $\lceil(\log k)^2\rceil\le t<k$. 

After several rounds of conversations with Claude (Opus~5.5 and Fable~5.1) and GPT-6.1 Sol and GPT-6 Astra we obtained the proof of the full conjecture, which will be given in a separate paper.

\begin{theorem}\label{thm:conj_main}
There are absolute constants $C,k_0$ such that, whenever $k\ge k_0$,
$1\le t<k$ and $n\ge Ctk^2$, every
$t$-intersecting $\cF\subseteq P_{n,k}$ satisfies
\[
 |\cF|\le p(n-t,k-t).
\]
Equality holds precisely for full $t$-stars of maximum size.
When $k-t\ge3$, the full star of $t$ ones is the unique such star.
\end{theorem}

\bibliographystyle{amsplain}
\bibliography{references}

\end{document}